\newcommand{\includefigure}[2][]{\centering\includegraphics[#1]{figures/#2}}
\theoremstyle{plain}
\newtheorem{thm}{Theorem}
\newtheorem{prop}{Proposition}
\newtheorem*{result}{Numerical Observations}
\theoremstyle{remark}
\global\long\def\e{\mathrm{e}}
\global\long\def\rd{\mathrm{d}}
\global\long\def\bu{\boldsymbol{u}}
\global\long\def\bv{\boldsymbol{v}}
\global\long\def\bw{\boldsymbol{w}}
\global\long\def\bomega{\boldsymbol{\omega}}
\global\long\def\bA{\boldsymbol{A}}
\global\long\def\bx{\boldsymbol{x}}
\global\long\def\be{\boldsymbol{e}}
\global\long\def\ba{\boldsymbol{a}}
\global\long\def\bphi{\boldsymbol{\phi}}
\global\long\def\bvphi{\boldsymbol{\varphi}}
\global\long\def\bpsi{\boldsymbol{\psi}}
\global\long\def\btau{\boldsymbol{\tau}}
\global\long\def\bU{\boldsymbol{U}}
\global\long\def\bV{\boldsymbol{V}}
\global\long\def\bUsigma{\bU_{\!\sigma}}
\global\long\def\bUzero{\bU_{\!0}}
\global\long\def\bAzero{\bA_{0}}
\global\long\def\bVsigma{\bV_{\!\!\sigma}}
\global\long\def\bTsigma{\boldsymbol{T}_{\!\sigma}}
\global\long\def\bOmegasigma{\boldsymbol{\Omega}_{\sigma}}
\global\long\def\bnabla{\boldsymbol{\nabla}}
\global\long\def\bcdot{\boldsymbol{\cdot}}
\global\long\def\bwedge{\boldsymbol{\wedge}}
\global\long\def\bzero{\boldsymbol{0}}
\global\long\def\axi{\mathrm{axi}}
\global\long\def\Re{\operatorname{Re}}
\global\long\def\Range{\operatorname{Range}}
\global\long\def\Kernel{\operatorname{Kernel}}
\global\long\def\hypF{\,_{1\!}F_{1}}
\global\long\def\scale{\varkappa}
\global\long\def\cL{\mathcal{L}}
\global\long\def\proj{\mathcal{P}\!}
\global\long\def\bZ{\mathbb{Z}}
\let\amstexbig\big
\def\newbig#1{%
  \ifx#1|%
    \expandafter\@firstoftwo
  \else
    \expandafter\@secondoftwo
  \fi
  {\big@bar}%
  {\amstexbig{#1}}%
}
\def\big@bar{\bBigg@{1.1}|}
\begin{document}

\title{Numerical investigations of non-uniqueness\\for the Navier–Stokes initial value problem \\in borderline spaces}

\author{Julien Guillod\textsuperscript{1} \qquad{} Vladimír Šverák\textsuperscript{2}\\
{\small{}\textsuperscript{1}UFR de Mathématiques, Université Paris-Diderot}\\
{\small{}\textsuperscript{2}School of Mathematics, University of Minnesota}}

\maketitle
\begin{abstract}
We consider the Cauchy problem for the incompressible Navier–Stokes equations in $\mathbb{R}^{3}$ for a one-parameter family of explicit scale-invariant axi-symmetric initial data, which is smooth away from the origin and invariant under the reflection with respect to the $xy$-plane. Working in the class of axi-symmetric fields, we calculate numerically scale-invariant solutions of the Cauchy problem in terms of their profile functions, which are smooth. The solutions are necessarily unique for small data, but for large data we observe a breaking of the reflection symmetry of the initial data through a pitchfork-type bifurcation. By a variation of previous results by \citet{Jia-Localinspace2013} it is known rigorously that if the behavior seen here numerically can be proved, optimal non-uniqueness examples for the Cauchy problem can be established, and two different solutions can exists for the same initial datum which is divergence-free, smooth away from the origin, compactly supported, and locally $(-1)$-homogeneous near the origin. In particular, assuming our (finite-dimensional) numerics represents faithfully the behavior of the full (infinite-dimensional) system, the problem of uniqueness of the Leray–Hopf solutions (with non-smooth initial data) has a negative answer and, in addition, the perturbative arguments such those by \citet{Kato1984} and \citet{Koch-Tataru2001}, or the weak-strong uniqueness results by Leray, Prodi, Serrin, Ladyzhenskaya and others, already give essentially optimal results.  There are no singularities involved in the numerics, as we work only with smooth profile functions. It is conceivable that our calculations could be upgraded to a computer-assisted proof, although this would involve a substantial amount of additional work and calculations, including a much more detailed analysis of the asymptotic expansions of the solutions at large distances.
\end{abstract}
\textsf{\textbf{Keywords}}\quad{}Navier–Stokes equations, Cauchy problem, Leray–Hopf solutions, non-uniqueness, scale-invariant solutions\\
\textsf{\textbf{MSC classes}}\quad{}35A02, 35D30, 35Q30, 76D05, 76D03, 76M10

\newpage

\section{Introduction}

We consider the Cauchy problem for the Navier–Stokes equations in
$(0,\infty)\times\mathbb{R}^{3}$,
\begin{align}
\partial_{t}\bu+\bu\bcdot\bnabla\bu & =\Delta\bu-\bnabla p\,, & \bnabla\bcdot\bu & =0\,, & \bu(0,\cdot) & =\bu_{0}\,.\label{eq:ns-cauchy}
\end{align}
Important open questions about the Cauchy problem \eqref{ns-cauchy}
concern existence and uniqueness of the solutions in suitable classes of functions.
There are essentially two methods to address
these issues: the global method based on \emph{a
priori} energy estimates and the local  perturbation
theory method.

The global method started by the seminal work of \citet{Leray-Surlemouvement1934}
and further developed by \citet{Hopf-UeberdieAnfangswertaufgabe1950},
has lead to the concept of the Leray–Hopf solutions. For $\bu_{0}\in L^{2}(\mathbb{R}^{3})$,
a Leray–Hopf  solution of the Cauchy problem in $(0,T)\times\mathbb{R}^{3}$ is a field
$\bu\in L^{\infty}(0,T;L^{2}(\mathbb{R}^{3}))\cap L^{2}(0,T;\dot{H}^{1}(\mathbb{R}^{3}))$
satisfying \eqref{ns-cauchy} weakly, together with  some additional requirements, such as the energy inequality
\[
\left\Vert \bu(t,\cdot)\right\Vert _{L^{2}(\mathbb{R}^{3})}^{2}+2\int_{0}^{t}\left\Vert \bnabla\bu(\tau,\cdot)\right\Vert _{L^{2}(\mathbb{R}^{3})}^{2}\rd\tau\leq\left\Vert \bu(0,\cdot)\right\Vert _{L^{2}(\mathbb{R}^{3})}^{2}\,,
\]
for all $t\in(0,T)$, and suitable continuity of the map $t\to \bu(t)$. We will be dealing with solutions which are smooth for $t>0$, and  have only a weak singularity at $(x,t)=(0,0)$, so the exact technical assumptions are not our focus here, although in connection with the scale-invariant solutions one should  point out the  important generalization by \citet{Lemarie-Rieusset2002, Lemarie-Rieusset2016}, where the global energy requirements are replaced by local ones.

By using the energy estimate and compactness
arguments, Leray and others showed the existence of a global Leray–Hopf solution for any
$\bu_{0}\in L^{2}(\mathbb{R}^{3})$ with $T=\infty$.  However, the proof, which relies on compactness arguments, does not give anything concerning uniqueness, except in the case when existence of a more regular solution is known. In that case one can use the energy arguments around the more regular solution, and show that any Leray–Hopf solution has to coincide with the regular one. These results, now known as weak-strong uniqueness theorems, go back to \citet{Leray-Surlemouvement1934}, with later generalizations by a number of authors, including for example \citet{Prodi-uniqueness1959} and \citet{Serrin-initialvalueproblem1963}. One has to mention also the results by \citet{Kiselev-Ladyzhenskaya1957}, where a slightly different approach is taken.

The perturbation method goes back to \citet{Oseen1911} and \citet{Leray-Surlemouvement1934} and was later developed in scale-invariant spaces by \citet{Kato-nonstationaryNavierStokes1962,Fujita-NavierStokesinitial1964} and \citet{Kato1984}.
It treats the  nonlinearity as a perturbation and inverts the linear part to obtain an integral equation, which is then approached via the Picard iteration. The borderline spaces for this method are scale-invariant with respect to the scaling symmetry of \eqref{ns-cauchy}:
\begin{equation}
\begin{aligned}\bu(t,x) & \to \bu_{\scale}(t,\bx) =\scale\bu(\scale^{2}t,\scale\bx)\,,\\
p(t,x) & \to p_{\scale}(t,\bx) =\scale^{2}p(\scale^{2}t,\scale\bx)\,,\\
\bu_0(x) & \to \bu_{0\scale}(\bx) =\scale\bu_{0}(\scale\bx)\,,
\end{aligned}
\label{eq:scaling}
\end{equation}
where $\scale>0$.
A space $X$ for the initial datum $\bu_0$ is scale-invariant if its norm is invariant under the scaling of the initial condition.
Well-known scale-invariant spaces $X$ relevant for the Cauchy problem
\eqref{ns-cauchy} are, for example, the spaces $L^3$, see \citet{Kato1984},
and ${\mathrm{BMO}}^{-1}$, see \citet{Koch-Tataru2001}. An important distinction
between the two is that the former does not contain the function $|\bx|^{-1}$,
whereas the latter does. This is related to the fact that for $L^3$ one can show
local-in-time well-posedness for data of any size (with the time of existence
depending on the datum), whereas for ${\mathrm{BMO}}^{-1}$ one can only treat small data.
The results in this paper suggest that this is not an artifact of the methods,
but reflects the actual behavior of the solutions.\\

A special class of initial data being naturally in ${\mathrm{BMO}}^{-1}$
and not in $L^3$ is given by scale-invariant initial data. An initial datum $\bu_0$ is
scale-invariant under the scaling symmetry \eqref{scaling} if $\bu_{0\scale}=\bu_{0}$
for all $\scale>0$. In particular such initial data behaves like $|\bx|^{-1}$ both near
the origin and at infinity, so are not in $L^3$. For scale-invariant initial data,
it is natural to look for the solutions of \eqref{ns-cauchy} as being also invariant
under the scaling \eqref{scaling} \emph{i.e.} satisfying $\bu_{\scale}=\bu$
and $p_{\scale}=p$ for all $\scale>0$. A scale-invariant solution $(\bu,p)$ has the form
\begin{align*}
\bu(t,\bx) & =\frac{1}{t^{1/2}}\bU\biggl(\frac{\bx}{t^{1/2}}\biggr)\,, & p(t,\bx) & =\frac{1}{t}P\biggl(\frac{\bx}{t^{1/2}}\biggr)\,,
\end{align*}
where the profiles $\bU=\bu(\cdot,1)$ and $P=p(\cdot,1)$ satisfy
\begin{subequations}
\begin{align}
\Delta\bU+\frac{\bx}{2}\bcdot\bnabla\bU+\frac{1}{2}\bU-\bU\bcdot\bnabla\bU-\bnabla P & =\bzero\,, & \bnabla\bcdot\bU & =0\,,\label{eq:ns-scale-eq}
\end{align}
in $\mathbb{R}^{3}$ together with the condition
\begin{equation}
\bU(\bx)=\bu_{0}(\bx)+o(\left|\bx\right|^{-1})\qquad\text{as}\qquad\left|\bx\right|\to\infty\,.\label{eq:ns-scale-bc}
\end{equation}
\label{eq:ns-scale}\end{subequations}
\citet{Jia-Localinspace2013} proved the following global existence result for scale-invariant initial data. A different proof was obtained by \citet{Bradshaw-Tsai2017}.
\begin{thm}
\label{thm:jia-sverak}If $\bu_{0}\in C^{\infty}(\mathbb{R}^{3}\setminus\{\bzero\})$
is scale-invariant and divergence-free, then there exists a least
one scale-invariant solution $\bu\in C^{\infty}((0,\infty)\times\mathbb{R}^{3})$
of \eqref{ns-cauchy}. Moreover if $\bu$ is a scale-invariant solution
then the profile $\bU=\bu(1,\cdot)$ satisfies \eqref{ns-scale-eq}
and
\begin{equation}
\bigl|\partial^{\alpha}\bigl(\bU(\bx)-\e^{\Delta}\bu_{0}(\bx)\bigr)\bigr|\leq\frac{C(\alpha,\bu_{0})}{\left(1+\left|\bx\right|\right)^{3+|\alpha|}}\,,\label{eq:bound-U}
\end{equation}
for any $\alpha$.
\end{thm}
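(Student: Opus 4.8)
The plan is to prove the theorem in two stages: first the existence of a scale-invariant solution, and then the asymptotic decay estimate \eqref{bound-U} for any scale-invariant solution. For the existence part, I would work entirely at the level of the profile equation \eqref{ns-scale-eq}, which is an elliptic system on $\mathbb{R}^{3}$ once the pressure is eliminated via the divergence-free constraint. The natural strategy is to construct $\bU$ as a perturbation of the linear heat-flow profile $\e^{\Delta}\bu_{0}$. Writing $\bU=\e^{\Delta}\bu_{0}+\bV$, one obtains for the correction $\bV$ a forced elliptic equation whose source term measures the failure of $\e^{\Delta}\bu_{0}$ to solve the full nonlinear profile equation; because $\bu_{0}$ is scale-invariant and smooth away from the origin, this source is well-controlled and decays at infinity. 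One then seeks $\bV$ in an appropriate function space and applies a fixed-point or degree-theoretic argument. Since the Jia–Šverák result gives \emph{existence} but not uniqueness, a topological method (Leray–Schauder degree or a compactness-plus-a-priori-estimate scheme) is more appropriate than contraction mapping, except in a perturbative regime; the key analytic input is an \emph{a priori} bound on any solution $\bV$ of the family of problems obtained by scaling down the nonlinearity.

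The main obstacle is obtaining the \emph{a priori} estimates that make the degree argument work, and these estimates must be uniform enough to also control the behavior at infinity. The difficulty is that the operator $\Delta+\tfrac{\bx}{2}\bcdot\bnabla+\tfrac{1}{2}$ appearing in \eqref{ns-scale-eq} is not coercive in the usual sense: the drift term $\tfrac{\bx}{2}\bcdot\bnabla$ grows linearly, so standard elliptic estimates on $\mathbb{R}^{3}$ do not apply directly. I would handle this by exploiting the parabolic origin of the operator — it is the profile operator of the heat semigroup — so that weighted energy estimates in Gaussian-type or polynomially weighted spaces become available. The local smoothness of the resulting solution, namely $\bu\in C^{\infty}((0,\infty)\times\mathbb{R}^{3})$, then follows from elliptic regularity bootstrapping applied to \eqref{ns-scale-eq}, since away from the origin the system is smoothly elliptic and the scale-invariant structure transfers spatial regularity of the profile into space-time regularity of $\bu$.

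For the decay estimate \eqref{bound-U}, I would analyze the equation for the difference $\bW=\bU-\e^{\Delta}\bu_{0}$ at large $|\bx|$. Because both $\bU$ and $\e^{\Delta}\bu_{0}$ share the same leading $|\bx|^{-1}$ behavior coming from $\bu_{0}$, the difference should decay faster, and the claim is the sharp rate $|\bx|^{-3}$ together with matching derivative bounds $|\bx|^{-3-|\alpha|}$. The mechanism is that $\bW$ satisfies an equation whose right-hand side consists of the nonlinear interaction terms $\bU\bcdot\bnabla\bU$ (suitably rearranged) together with the pressure gradient, and these decay fast enough that the leading $|\bx|^{-3}$ rate is governed by the far-field Green's function of the Oseen-type linearized operator. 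I would establish this by a barrier or weighted-norm argument: assume a decay rate, insert it into the nonlinearity to see that the forcing decays at least as fast as $|\bx|^{-5}$, and close a bootstrap showing that $\bW$ inherits the $|\bx|^{-3}$ decay dictated by the linear operator. The derivative bounds then follow from interior elliptic estimates on dyadic annuli $|\bx|\sim R$, rescaled to unit size; the scaling gains exactly one power of $R^{-1}$ per derivative, yielding the stated $|\bx|^{-3-|\alpha|}$. The hard part here is pinning down the \emph{exact} exponent $3$ rather than merely some fast decay, which requires understanding the precise asymptotics of the linearized profile operator — the same delicate large-distance analysis the authors flag in the abstract as the main additional ingredient needed for a computer-assisted proof.
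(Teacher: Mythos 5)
First, a point of orientation: the paper does not prove this theorem at all --- it is quoted from \citet{Jia-Localinspace2013}, with an alternative proof due to \citet{Bradshaw-Tsai2017} --- so your attempt has to be measured against the cited proof. Your skeleton for existence (recast \eqref{ns-scale-eq} as a problem for the correction $\bV=\bU-\e^{\Delta}\bu_{0}$ and run a Leray--Schauder degree argument, correctly observing that contraction can only reach small data) does match the architecture of the known proof. The genuine gap is where the a priori estimates come from. You propose Gaussian- or polynomially-weighted elliptic energy estimates for the drift operator $\Delta+\frac{\bx}{2}\bcdot\bnabla+\frac{1}{2}$. This is how the \emph{linear} operator is analyzed by \citet{Gallay.Wayne-Invariantmanifoldsand2002}, but for large data no coercive structure of this kind survives the nonlinearity: the linearization $\cL(\bU)$ loses invertibility and even acquires eigenvalues crossing zero --- that is the very phenomenon the present paper investigates --- so no energy argument on the profile equation is known that bounds \emph{all} solutions along the continuation family. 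What \citet{Jia-Localinspace2013} actually do is fundamentally parabolic: they prove local-in-space smoothing estimates near $t=0$ for local Leray solutions of the Cauchy problem \eqref{ns-cauchy} (local energy inequality plus Caffarelli--Kohn--Nirenberg-type $\varepsilon$-regularity), and transfer them to the profile by scale invariance, since the regime $|\bx|\to\infty$ for $\bU$ is the regime $t\to0^{+}$ at unit distance from the origin, where $\bu_{0}$ is smooth. This input --- the heart and the title of their paper --- has no counterpart in your plan, and it is also what yields \eqref{bound-U} as an estimate valid for \emph{every} scale-invariant solution, which is exactly the a priori bound the degree argument needs; your plan treats existence and decay as separable stages, whereas in the actual proof the decay estimate \emph{is} the a priori estimate.

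Your decay analysis also contains concrete errors. Since $\e^{\Delta}\bu_{0}$ solves the linear profile equation, $\bW=\bU-\e^{\Delta}\bu_{0}$ solves $\Delta\bW+\frac{\bx}{2}\bcdot\bnabla\bW+\frac{1}{2}\bW=\bU\bcdot\bnabla\bU+\bnabla P$, and the right-hand side contains the self-interaction $(\e^{\Delta}\bu_{0})\bcdot\bnabla(\e^{\Delta}\bu_{0})$, which decays like $|\bx|^{-3}$, not $|\bx|^{-5}$; only the terms involving $\bW$ are $O(|\bx|^{-5})$ once $\bW=O(|\bx|^{-3})$ is known. Accordingly, the rate $3$ is not ``dictated by the far-field Green's function'' of the linearized operator: for the drift operator a forcing $\sim|\bx|^{-a}$ with $a\neq1$ produces a particular solution $\sim|\bx|^{-a}$ (the drift term does not gain powers the way $\Delta^{-1}$ would), so the exponent is inherited directly from the forcing. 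More seriously, the bootstrap has no starting point: the linear profile equation admits homogeneous solutions decaying only like $|\bx|^{-1}$ (namely $\e^{\Delta}\ba$ for any $(-1)$-homogeneous divergence-free $\ba$), so to exclude them you must already know $\bW=o(|\bx|^{-1})$, i.e.\ the boundary condition \eqref{ns-scale-bc} in quantitative form, for an \emph{arbitrary} scale-invariant solution --- and establishing that is precisely the content of the parabolic smoothing estimate, not something one may assume. Finally, two technical points: your rescaled interior estimates on annuli $|\bx|\sim R$ do not have uniformly bounded coefficients (the drift there has size $R$), and the pressure is nonlocal, so inserting pointwise decay into the nonlinearity must control the long-range tails created by the projection onto divergence-free fields --- an effect the paper itself flags in \subref{methods-eigensolver}, where it limits the expected eigenfunction decay to $O(|\bx|^{-4})$.
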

The scale-invariant solution is unique for small initial data.
For large initial data it has been conjectured by \citet{Jia-Localinspace2013, Jia-Areincompressible3d2015}
that the scale-invariant solution is not unique.
Our goal is to present numerical evidence for this conjecture.
From the existence of such solutions, the non-uniqueness
of Leray–Hopf solutions and the sharpness of the Serrin uniqueness
criterion can be established along the lines of \citet{Jia-Areincompressible3d2015}.
In the case of the harmonic map
heat flow, related results have been obtained by \citet{Germain-flowmap2016}.\\

We now introduce the function spaces needed for the study of \eqref{ns-scale}.
Let
\begin{subequations}
\begin{equation}
\mathcal{U}=\bigl\{\bU\in L^{\infty}(\mathbb{R}^{3})\colon\bnabla\bcdot\bU=0\;\text{and}\;\left\Vert \bU\right\Vert _{\mathcal{U}}<\infty\bigr\}\,,\label{eq:def-U-space}
\end{equation}
where
\begin{equation}
\left\Vert \bU\right\Vert _{\mathcal{U}}=\sum_{\left|\alpha\right|\le2}\sup_{\bx\in\mathbb{R}^{3}}\left(1+\left|\bx\right|\right)^{1+\alpha}\left|\partial^{\alpha}\bU(\bx)\right|\,.\label{eq:def-U-norm}
\end{equation}
\label{eq:def-U}\end{subequations}
The profile $\bU$ of a scale-invariant solution belongs naturally to $\mathcal{U}$.
Let
\begin{subequations}
\begin{equation}
\mathcal{V}=\bigl\{\bv\in L^{2}(\mathbb{R}^{3})\cap L^{4}(\mathbb{R}^{3}):\bnabla\bcdot\bv=0\bigr\}\,,\label{eq:def-V-space}
\end{equation}
with the norm
\begin{equation}
\left\Vert \bv\right\Vert _{\mathcal{V}}=\left\Vert \bv\right\Vert _{L^{2}(\mathbb{R}^{3})}+\left\Vert \bv\right\Vert _{L^{4}(\mathbb{R}^{3})}\,.\label{eq:def-V-norm}
\end{equation}
\label{eq:def-V}\end{subequations}
In view of \eqref{bound-U}, the difference
between two different scale-invariant solutions sharing the same initial datum will be in $\mathcal{V}$.
We define $\mathcal{D}$ as the
following subspace of $\mathcal{V}$,
\begin{subequations}
\begin{equation}
\mathcal{D}=\bigl\{\bv\in\mathcal{V}:\partial^{\alpha}\bv\in\mathcal{V}\:\text{and}\:\bx\bcdot\bnabla\bv\in\mathcal{V}\:\text{for}\:0\leq\left|\alpha\right|\leq2\bigr\}\,,\label{eq:def-D-space}
\end{equation}
with the norm
\begin{equation}
\left\Vert \bv\right\Vert _{\mathcal{D}}=\left\Vert \bv\right\Vert _{\mathcal{V}}+\left\Vert \bnabla\bv\right\Vert _{\mathcal{V}}+\Vert\bnabla^{2}\bv\Vert_{\mathcal{V}}+\left\Vert \bx\bcdot\bnabla\bv\right\Vert _{\mathcal{V}}\,.\label{eq:def-D-norm}
\end{equation}
\label{eq:def-D}\end{subequations}
The subspace of axi-symmetric vector fields in $\mathcal{D}$ is denoted by $\mathcal{D}_{\axi}$.
Given some fixed scale-invariant vector-field $\ba_{0}\in C^{\infty}(\mathbb{R}^{3}\setminus\left\{ \bzero\right\} )$,
we define the map $F\colon\mathcal{D}\times\mathbb{R}\to\mathcal{V}$ by
\begin{equation}
F(\bv,\sigma)=-\Delta\bv-\frac{\bx}{2}\bcdot\bnabla\bv-\frac{1}{2}\bv+\left(\sigma\bAzero+\bv\right)\bcdot\bnabla\left(\sigma\bAzero+\bv\right)+\bnabla P\,,\label{eq:def-F}
\end{equation}
where $\bAzero=\e^{\Delta}\ba_{0}$ and $P$ is chosen such that $F(\bv,\sigma)$
is divergence-free. Therefore $\bU$ is a solution of \eqref{ns-scale}
with $\bu_{0}=\sigma\ba_{0}$ for $\sigma\in\mathbb{R}$ if and only
if $F(\bv,\sigma)=\bzero$ where $\bv=\bU-\sigma\bAzero$. The linearization
of \eqref{ns-scale} around $\bU\in\mathcal{U}$ is defined as
the operator $\mathcal{L}(\bU)\colon\mathcal{D}\to\mathcal{V}$ where
\begin{equation}
\cL(\bU)\bphi=-\Delta\bphi-\frac{\bx}{2}\bcdot\bnabla\bphi-\frac{1}{2}\bphi+\bU\bcdot\bnabla\bphi+\bphi\bcdot\bnabla\bU+\bnabla p\,,\label{eq:def-L}
\end{equation}
and in particular $D_{1}F(\bv,\sigma)=\cL(\sigma\bAzero+\bv)$. The operator $\mathcal{L}(\bU)$ is viewed as an unbounded operator in $\mathcal{V}$ with domain $\mathcal{D}$.\\

For small values of $\sigma$ the solution of $F(\bv,\sigma)=\bzero$
is unique, leading to a unique solution $\bUsigma=\sigma\bAzero+\bv$
of \eqref{ns-scale}. As long as the kernel of the linearization $D_{1}F(\bv,\sigma)=\cL(\bUsigma)$
is trivial, the solution can be (locally) uniquely continued  to larger values
of $\sigma$. However, if at some value of $\sigma=\sigma_{0}$ this
kernel is no more trivial then another solution can bifurcate from
$\bv$ leading to the non-uniqueness of solutions of \eqref{ns-scale}.

The following result on the spectrum of the linearization
$\cL(\bU)$ follows essentially by the results of \citet{Gallay.Wayne-Invariantmanifoldsand2002}
and \citet{Jia-Areincompressible3d2015}.
\begin{thm}
\label{thm:spectrum-L}
We have:
\begin{enumerate}
\item The spectrum of $\cL(\bzero)$ is given by
\[
\sigma\bigl(\cL(\bzero)\bigr)=\bigl\{\lambda\in\mathbb{C}:\Re\lambda\geq\tfrac{3}{4}\bigr\}\cup\bigl\{\tfrac{3}{2}+n\,,\;n\in\mathbb{N}\bigr\}\,.
\]
The eigenvectors corresponding to the continuous part $\Re\lambda>\tfrac{3}{4}$
decay at infinity like $\left|\bx\right|^{-2\lambda}$, whereas the
eigenvectors corresponding to $\tfrac{3}{2}+n$ decay exponentially
fast like $\e^{-\left|\bx\right|^{2}/4}$. The multiplicity of the
eigenvalue $\tfrac{3}{2}+n$ is $(n+1)(n+3)$ with domain $\mathcal{D}$ and $n+1$ with the axi-symmetric domain $\mathcal{D}_\axi$.
\item If $\bU\in\mathcal{U}$, the spectrum of $\cL(\bU)$ satisfies
\begin{equation}
\sigma\bigl(\cL(\bU)\bigr)\subset\bigl\{\lambda\in\mathbb{C}:\Re\lambda\geq\tfrac{3}{4}\bigr\}\cup S\,,\label{eq:spectrum-LU}
\end{equation}
where $S$ is a discrete set such that $\bigl\{\lambda\in S:\Re\lambda\leq\delta\bigr\}$
is finite for any $\delta<\frac{3}{4}$.
\end{enumerate}
\end{thm}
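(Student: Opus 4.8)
The plan is to prove the two parts by different means: part~(1) by an explicit diagonalisation of the constant-coefficient operator $\cL(\bzero)$, and part~(2) by a perturbation argument that treats the $\bU$-dependent terms as a relatively compact perturbation of $\cL(\bzero)$.

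For part~(1), first I would observe that on divergence-free fields the pressure term in $\cL(\bzero)$ is forced to vanish, so that $\cL(\bzero)$ acts componentwise as the scalar operator $-\Delta-\frac{\bx}{2}\bcdot\bnabla-\frac12$ restricted to the solenoidal subspace. This is a rescaled Ornstein--Uhlenbeck operator, the generator of the heat equation written in self-similar variables, and its spectral theory is classical. A direct computation shows that $\e^{-\left|\bx\right|^{2}/4}$ is an eigenfunction with eigenvalue $1$ in $\mathbb{R}^{3}$, and more generally the Hermite modes of total degree $m$ (Hermite polynomials times $\e^{-\left|\bx\right|^{2}/4}$) are eigenfunctions with eigenvalue $1+\frac{m}{2}$, decaying at the Gaussian rate. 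I would then impose the divergence-free constraint: within each Hermite eigenspace the solenoidal fields form a subspace whose dimension is a representation-theoretic count, and matching the admissible eigenvalues with those permitted by the constraint selects the odd degrees $m=2n+1$ and yields the discrete eigenvalues $\frac32+n$, with exponentially decaying eigenfunctions of multiplicity $(n+1)(n+3)$ on $\mathcal{D}$ and $n+1$ on $\mathcal{D}_{\axi}$ after restriction to axi-symmetric modes.

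For the continuous part I would use the scaling structure of the operator rather than its elliptic part: through the Mellin transform, the $L^{2}$-critical homogeneous generalized eigenfunctions behaving like $\left|\bx\right|^{-2\lambda}$ fill the half-plane $\Re\lambda\geq\frac34$, the threshold $\frac34$ being exactly the $L^{2}(\mathbb{R}^{3})$-integrability cutoff for such modes. All of these computations, including the precise multiplicities and the identification of the continuous spectrum, are the content of \citet{Gallay.Wayne-Invariantmanifoldsand2002}, whom I would cite rather than redo the bookkeeping. Then, for part~(2), I would write $\cL(\bU)=\cL(\bzero)+B(\bU)$ with $B(\bU)\bphi=\proj\bigl(\bU\bcdot\bnabla\bphi+\bphi\bcdot\bnabla\bU\bigr)$. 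Since $\bU\in\mathcal{U}$ decays like $\left|\bx\right|^{-1}$ and its derivatives one power faster, multiplication by $\bU$ and by $\bnabla\bU$ supplies decay, and composing with the smoothing resolvent of $\cL(\bzero)$ shows that $B(\bU)$ is relatively compact with respect to $\cL(\bzero)$. By the invariance of the essential spectrum under relatively compact perturbations, the essential spectrum of $\cL(\bU)$ coincides with that of $\cL(\bzero)$, namely the half-plane $\Re\lambda\geq\frac34$. Everything to the left of it is then discrete spectrum—isolated eigenvalues of finite multiplicity that can accumulate only at the line $\Re\lambda=\frac34$—which gives exactly the finiteness of $\{\lambda\in S:\Re\lambda\leq\delta\}$ for each $\delta<\frac34$. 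This is the argument of \citet{Jia-Areincompressible3d2015}.

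The main obstacle I anticipate is precisely the relative compactness in part~(2). Because the ambient space is $\mathcal{V}=L^{2}\cap L^{4}$ and the domain $\mathcal{D}$ carries the anisotropic weight $\bx\bcdot\bnabla$, one must verify carefully that the $\left|\bx\right|^{-1}$ decay of $\bU$ genuinely compensates the loss of one derivative in $B(\bU)$ after inverting $\cL(\bzero)$, so that the resulting operator is compact on $\mathcal{V}$ and not merely bounded. Controlling this uniformly along the whole half-plane that bounds the essential spectrum, rather than near a single isolated eigenvalue, is the delicate point, and it is here that the precise resolvent and decay estimates for the Ornstein--Uhlenbeck operator, together with the weighted structure of $\mathcal{D}$, have to be used.
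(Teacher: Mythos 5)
Your part (2) is essentially the paper's own argument: write $\cL(\bU)=\cL(\bzero)+B(\bU)$, use the decay encoded in $\bU\in\mathcal{U}$ to get relative compactness of $B(\bU)$, invoke invariance of the essential spectrum, and conclude that everything to the left of $\{\Re\lambda=\tfrac34\}$ is discrete; the paper does exactly this and, like you, defers the compactness verification to \citet[Lemma 2.7]{Jia-Areincompressible3d2015}. Your description of the continuous part in (1) (generalized eigenfunctions $\sim\left|\bx\right|^{-2\lambda}$ with $\Re\lambda>\tfrac34$ as the $L^{2}$ cutoff) also agrees with what the paper extracts from the large-$r$ asymptotics of the Kummer functions.

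The genuine gap is in your treatment of the discrete spectrum in part (1). The claim that the solenoidal constraint ``selects the odd degrees $m=2n+1$'' is false: the constraint only kills the degree-zero modes. For every $m\geq1$ there are divergence-free fields whose components are degree-$m$ Hermite modes; a concrete counterexample to your parity rule is the pure-swirl axisymmetric field $\bv=rz\,\e^{-\left|\bx\right|^{2}/4}\be_{\theta}$, whose Cartesian components $(-x_{2}x_{3},\,x_{1}x_{3},\,0)\,\e^{-\left|\bx\right|^{2}/4}$ have degree $2$: it is divergence-free, Gaussian-decaying, and satisfies the eigenvalue equation with $p=0$ and eigenvalue $1+\tfrac{2}{2}=2$, which is not of the form $\tfrac32+n$. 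Hence the discrete set has half-integer spacing $\bigl\{\tfrac32+\tfrac n2,\;n\in\mathbb{N}\bigr\}$, which is exactly what the paper's own proof obtains in \propref{point-spectrum} via the poloidal--toroidal decomposition $\bv=\mathbf{T}(\psi)+\mathbf{S}(\phi)$, spherical harmonics, and Kummer functions, and what the paper's numerics confirm (eigenvalues $\tfrac32+\tfrac n2$ with axisymmetric multiplicity $n+1$). Your rule also breaks the multiplicity count: the eigenspace of $\tfrac52$ has dimension $15$ (toroidal modes with $l\in\{1,3\}$ plus poloidal modes with $l=2$), not the $(1+1)(1+3)=8$ that your indexing would assign. (The theorem as printed does say $\tfrac32+n$, but that indexing is inconsistent with the paper's own proof and numerics, so reproducing it is not a vindication of the argument.) Finally, you cannot outsource this bookkeeping to \citet{Gallay.Wayne-Invariantmanifoldsand2002}: their Theorem A.1 concerns the scalar (componentwise) operator $L$ without the divergence-free constraint, which a priori gives only the inclusion $\sigma(\cL(\bzero))\subset\sigma(L)$; producing the divergence-free eigenfunctions and counting them---the step you replaced by the incorrect parity rule---is precisely the content of the paper's proposition.
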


This theorem ensures that only discrete spectrum can cross the imaginary axis.
One can establish the following continuation and bifurcation results depending
on the behavior of the discrete spectrum:
\begin{thm}
\label{thm:continuation-bifurcation}
Let $\sigma_{0}\in\mathbb{R}$ and $\bv_{0}\in\mathcal{D}$ be a solution of $F(\bv_{0},\sigma_{0})=\bzero$,
so $\bUzero=\sigma_{0}\bAzero+\bv_{0}$ is a solution of \eqref{ns-scale}
with $\bu_{0}=\sigma_{0}\ba_{0}$.
\begin{enumerate}
\item If zero is not in the spectrum $\sigma(\cL(\bUzero))$, then there exist $\varepsilon>0$
and a unique smooth solution curve $\bv\colon(\sigma_{0}-\varepsilon,\sigma_{0}+\varepsilon)\to\mathcal{D}$
such that $F(\bv(\sigma),\sigma)=\bzero$ and $\bv(\sigma_{0})=\bv_{0}$.
In particular $\bUsigma=\sigma\bAzero+\bv(\sigma)$ is a solution
of \eqref{ns-scale} with $\bu_{0}=\sigma\ba_{0}$.
\item Assume the existence of a smooth solution curve $\bv_{1}:(\sigma_{0}-\varepsilon,\sigma_{0}+\varepsilon)\to\mathcal{D}$
such that $F(\bv_{1}(\sigma),\sigma)=\bzero$. If the spectrum of
the linearization has the form
\begin{equation}
\bigl\{0\bigr\}\subset\sigma\bigl(\cL(\bUzero)\bigr)\subset\bigl\{\lambda\in\mathbb{C}:\Re\lambda>\delta\bigr\}\cup\bigl\{0\bigr\}\,,
\label{eq:spectral-condition}
\end{equation}
for some $\delta>0$, where zero is a simple eigenvalue with associated
eigenvector $\bphi$ and if
\begin{equation}
\bpsi\bcdot\bnabla\bphi+\bphi\bcdot\bnabla\bpsi+\bnabla p\notin\Range\bigl(\cL(\bUzero)\bigr)\,,\label{eq:thm-bif-hyp}
\end{equation}
where $\bpsi=\bA_{0}+\bv_{1}^{\prime}(\sigma_{0})=\partial_{\sigma}\bUsigma\bigl|_{\sigma=\sigma_{0}}$,
then in addition to the solution curve $\bigl\{\bigl(\bv_{1}(\sigma),\sigma\bigr)\in\mathcal{D}\times\mathbb{R}\,,\;\sigma\in(\sigma_{0}-\varepsilon,\sigma_{0}+\varepsilon)\bigr\}$
through $(\bv_{0},\sigma_{0})$ there exists another unique smooth
solution curve $\bigl\{\bigl(\bv_{2}(s),\sigma_{2}(s)\bigr)\in\mathcal{D}\times\mathbb{R}\,,\;s\in(-\varepsilon,\varepsilon)\bigr\}$
through $(\bv_{0},\sigma_{0})$ such that $F(\bv_{2}(s),\sigma_{2}(s))=\bzero$,
$\bv_{2}(0)=\bv_{0}$, and $\sigma_{2}(0)=\sigma_{0}$.

If $\bphi\bcdot\bnabla\bphi+\bnabla p\notin\Range\bigl(\cL(\bUzero)\bigr)$
the bifurcation is transcritical.
If
$\bphi\bcdot\bnabla\bphi+\bnabla p\in\Range\bigl(\cL(\bUzero)\bigr)$ and an additional non-degeneracy assumption is satisfied, we are dealing with a pitchfork bifurcation.
\end{enumerate}
\end{thm}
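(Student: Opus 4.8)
The plan for part~(1) is a direct application of the analytic implicit function theorem. Because $F$ is a quadratic polynomial in $(\bv,\sigma)$ — see \eqref{def-F} — with the Leray projection supplying $\bnabla P$, it is an analytic map $\mathcal{D}\times\mathbb{R}\to\mathcal{V}$, and by the identity $D_{1}F(\bv,\sigma)=\cL(\sigma\bAzero+\bv)$ we have $D_{1}F(\bv_{0},\sigma_{0})=\cL(\bUzero)$. The hypothesis that zero does not lie in $\sigma(\cL(\bUzero))$ means precisely that $\cL(\bUzero)\colon\mathcal{D}\to\mathcal{V}$ is a bounded bijection with bounded inverse. The implicit function theorem then yields $\varepsilon>0$ and the unique smooth curve $\bv(\sigma)$ with $F(\bv(\sigma),\sigma)=\bzero$ and $\bv(\sigma_{0})=\bv_{0}$.

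For part~(2) I would first reduce to bifurcation from a trivial branch by shifting along the known curve: set $\bw=\bv-\bv_{1}(\sigma)$ and $G(\bw,\sigma)=F(\bw+\bv_{1}(\sigma),\sigma)$, so that $G(\bzero,\sigma)\equiv\bzero$ and $D_{1}G(\bzero,\sigma)=\cL(\bUsigma)$, with $D_{1}G(\bzero,\sigma_{0})=\cL(\bUzero)$. The spectral hypothesis \eqref{spectral-condition} makes zero an isolated eigenvalue of finite algebraic multiplicity; since it is simple, $\cL(\bUzero)$ is Fredholm of index zero with $\Kernel\cL(\bUzero)=\mathrm{span}\{\bphi\}$ and $\mathrm{codim}\,\Range\cL(\bUzero)=1$. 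This is exactly the setting of the Crandall–Rabinowitz bifurcation-from-a-simple-eigenvalue theorem, and the Lyapunov–Schmidt reduction it rests on uses the spectral projection onto $\bphi$ to solve the range-component of $G(\bw,\sigma)=\bzero$ and reduce to a scalar equation.

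The remaining ingredient is the transversality condition $D_{\sigma}D_{1}G(\bzero,\sigma_{0})\bphi\notin\Range\cL(\bUzero)$. Differentiating $\cL(\bUsigma)\bphi$ in $\sigma$ at $\sigma_{0}$, and using $\partial_{\sigma}\bUsigma\bigl|_{\sigma_{0}}=\bAzero+\bv_{1}^{\prime}(\sigma_{0})=\bpsi$, gives $D_{\sigma}D_{1}G(\bzero,\sigma_{0})\bphi=\bpsi\bcdot\bnabla\bphi+\bphi\bcdot\bnabla\bpsi+\bnabla p$, so hypothesis \eqref{thm-bif-hyp} is precisely the Crandall–Rabinowitz condition. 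The theorem then produces the second branch $(\bw_{2}(s),\sigma_{2}(s))$ with $\bw_{2}(s)=s\bphi+o(s)$; undoing the shift yields $\bv_{2}(s)=\bv_{1}(\sigma_{2}(s))+\bw_{2}(s)$ with $\bv_{2}(0)=\bv_{0}$ and $\sigma_{2}(0)=\sigma_{0}$, and uniqueness of this branch near $(\bv_{0},\sigma_{0})$ follows from the reduction.

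Finally, the transcritical/pitchfork dichotomy is read off from the leading coefficient of the reduced scalar equation. Let $\ell$ span the annihilator of $\Range\cL(\bUzero)$. Differentiating the nonlinearity twice in $\bv$ gives $D_{1}^{2}F(\bv_{0},\sigma_{0})[\bphi,\bphi]=2(\bphi\bcdot\bnabla\bphi+\bnabla p)$ after the Leray projection, so expanding $G(\bw_{2}(s),\sigma_{2}(s))=\bzero$ to second order and applying $\ell$ yields $\sigma_{2}^{\prime}(0)=-\ell(\bphi\bcdot\bnabla\bphi+\bnabla p)/\ell(\bpsi\bcdot\bnabla\bphi+\bphi\bcdot\bnabla\bpsi+\bnabla p)$, whose denominator is nonzero by transversality. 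Hence $\sigma_{2}^{\prime}(0)\neq0$ exactly when $\bphi\bcdot\bnabla\bphi+\bnabla p\notin\Range\cL(\bUzero)$, giving a transcritical bifurcation; when it lies in the range the linear term vanishes and, under the stated nondegeneracy of the cubic coefficient $\sigma_{2}^{\prime\prime}(0)\neq0$, one obtains a pitchfork. The main obstacle is not the bifurcation analysis, which is standard once the Fredholm structure is in place, but rather the functional-analytic groundwork: verifying that $F$ maps $\mathcal{D}$ analytically into $\mathcal{V}$ with the Leray projection bounded and the quadratic products controlled, and that the spectral hypothesis indeed yields the closed-range, index-zero, simple-kernel structure that the reduction requires.
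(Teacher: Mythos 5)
Your proposal is correct and follows essentially the same route as the paper: implicit function theorem for part (1); the shift $\hat F(\bw,\sigma)=F(\bv_{1}(\sigma)+\bw,\sigma)$ to reduce to a trivial branch (identical to the paper's $\hat F$); verification that \eqref{thm-bif-hyp} is exactly the Crandall–Rabinowitz transversality condition $D_{12}\hat F(\bzero,\sigma_{0})\bphi\notin\Range\bigl(\cL(\bUzero)\bigr)$; and the sign of $\ell\bigl(\bphi\bcdot\bnabla\bphi+\bnabla p\bigr)$ in the reduced scalar equation for the transcritical/pitchfork dichotomy, as in Kielhöfer's treatment cited by the paper. The only minor differences are that you obtain the Fredholm index-zero structure directly from the isolated simple eigenvalue via the Riesz spectral projection, whereas the paper deduces it from the relative compactness of $\cL(\bUzero)-\cL(\bzero)$, and that the smoothness of $F$, which you flag as the ``main obstacle,'' is settled by the paper in one line via the embeddings $\mathcal{D}\subset W^{2,4}(\mathbb{R}^{3})\subset C^{1}(\mathbb{R}^{3})$ together with the fact that $F$ is quadratic.
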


Our aim is to choose a particular scale-invariant vector-field
$\ba_{0}$ and to construct numerical solutions exhibiting the bifurcation.
In the situation that we will consider here we will have an additional structure
coming from a $\bZ_2$-symmetry. The branch $\bv_1$ will correspond to the solutions
invariant under the $\bZ_2$-symmetry, whereas the branch $\bv_2$ will correspond
to the solutions with broken symmetry. The branch itself (as a set) will be invariant
under the symmetry, and hence the bifurcation will necessarily be of pitchfork type,
even though the usual non-degeneracy condition used for pitchfork bifurcations
may not be satisfied.\\

A possible mechanism behind the ill-posedness can be explained at a heuristic
level as follows. Assume the space is filled with an incompressible fluid and
consider a portion of the fluid of the shape of a thin disc
$\{x^2+y^2\le R^2\,,\,\,|z|\le \varepsilon\}$. If we impose on this portion of
the fluid a fast rotation about the $z$-axis with a smooth, albeit sharp
transition to zero outside of the disc, the centrifugal force will result in an
outward motion of the fluid along the $xy$-plane, which will be superimposed on
the rotation. One can perhaps compare this outward flux with a jet of fluid,
except that our ``jet'' goes out from the origin in all directions lying in the
$xy$-plane, rather than just in one direction. Due to incompressibility there
must also be an inward flux to the origin, which will take place along the
$z$-axis. Assume the velocity field is invariant under the reflection about the
$xy$-plane. When the field is large, the flow can be expected to be unstable,
and any slight deviation from the reflection symmetry will quickly lead to a
full breaking of this symmetry. To break the symmetry for smooth solutions, we
need an outside impulse. It may be very small, but it cannot be zero. However,
the situation may be different for non-smooth initial data. We can imagine a
scale-invariant initial datum $\bu_0$ which resembles a rotation localized in
the $xy$-plane as much as possible. This can be achieved only to a degree, since
the scale-invariance poses its own restrictions, which are of course not
compatible with fast decay (among other things). We can still think of $\bu_0$
as imposing significant rotation in some bounded region of the $xy$-plane, but
falling  off to zero as we move away from the plane. Also, $\bu_0$ is smooth
except at the origin, where it of course cannot be smooth due to the scale
invariance (as long as it is non-trivial).
Now the symmetry breaking impulse can essentially come from within the singular
point, and as such it can be completely hidden from the information provided by
the initial datum. In other words, at this level of singularity in the data, the
model is asked to operate based on the information which is insufficient for it,
somewhat similarly as in the non-uniqueness induced by reverse bubbling in the
2d harmonic map heat flow, see \cite{Topping2002}, for example. This analogy is
not perfect, as the 2d harmonic map heat flow is critical. For the 3d case we
refer the reader to \citet{Germain-flowmap2016} already quoted above.

The very general method used in perturbation theory or weak-strong uniqueness
breaks down exactly at this point. The numerics presented below suggests that,
at least when well-posedness for rough initial data is concerned, the non-linear
term in the equation does not seem to have any magical properties which would
enable one to go beyond the general perturbation analysis. We emphasize that
this conclusion may not apply to the problem of singularity formation from
smooth data.  The situation there may or may not be the similar (see for example
\citet{Tao2014}), but our results say nothing about it. However, if a
singularity is formed, our results suggest that, quite likely, uniqueness may be
lost. The connection between loss of regularity and uniqueness is, of course,
not new. Already in the 1950s, Ladyzhenskaya  emphasized the possibility of
non-uniqueness for solutions with insufficient regularity (including the
Leray–Hopf solutions), and \citet{Ladyzhenskaya-uniquenessandsmoothness1967}
presented an example closely related to the scenario discussed in this paper.\\

Due to our limited computational resources, we will work with axi-symmetric solutions,
\emph{i.e.} solutions which are invariant under the rotations around
the $z$-axis. In addition, we consider the $\bZ_2$-symmetry $\mathcal{R}$
defined by the reflection with respect to the plane $z=0$.
For the reasons previously explained, we choose the following scale-invariant
axi-symmetric divergence-free vector field for the initial data
\[
\ba_{0}(r,z)=\frac{\e^{-4(z/r)^{2}}}{\sqrt{r^{2}+z^{2}}}\be_{\theta}\,,
\]
where $(r,\theta,z)$ denote the cylindrical coordinates.
We note that this initial datum has ``pure swirl'' and is clearly invariant under
$\mathcal{R}$. In this paper we do not consider the breaking of the axial symmetry,
although it is conceivable that for some classes of the initial data this may occur.

The solutions we are dealing with in our work here are defined on the whole space
$\mathbb{R}^3$, and hence some truncation of the domain is needed for the numerics.
The solutions have good asymptotic expansions for $|\bx|\to\infty$,
which in principle could be calculated to a higher order precision.
However, the most obvious approximations seem to work quite well for the numerics,
and therefore we did not use the higher order expansions.
No doubt a possible computer-assisted proof would need to work with more
sophisticated approximations for large~$|\bx|$.

The numerical methods are described in \secref{methods} and our numerical results
are presented in \secref{results}, which can be summarized as follows:

\begin{result}
We numerically observe the following:
\begin{enumerate}
\item\label{enu:steady}
In the range $\sigma\in[0,500]$, there exists
a smooth curve of axi-symmetric and $\mathcal{R}$-symmetric self-similar solution
$\bUsigma$ of \eqref{ns-scale}, with $\bUsigma(\bx)=\sigma \ba_0(\bx)+o(|\bx|^{-1})$ at infinity.

\item\label{enu:spectrum}
The spectrum of the linearization
$\cL(\bUsigma)$ with domain $\mathcal{D}_{\axi}$ has the form
\begin{equation}
\bigl\{\lambda_{\sigma}\bigr\}\subset\sigma\bigl(\cL\bigl(\bUsigma\bigr)\bigr)\subset\bigl\{\lambda\in\mathbb{C}:\Re\lambda>\delta\bigr\}\cup\bigl\{\lambda_{\sigma}\bigr\}\,,\label{eq:numerics-spectrum}
\end{equation}
for some $\delta>0$, and there exists $\sigma_{0}\approx292$ such that $\lambda_{\sigma}>0$ for $\sigma<\sigma_{0}$, $\lambda_{\sigma}=0$ for $\sigma=\sigma_{0}$, and $\lambda_{\sigma}<0$ for $\sigma>\sigma_{0}$.
Near $\sigma_0$, the eigenvalue $\lambda_{\sigma}$ is simple, continuous in $\sigma$ and the associated eigenvector is not $\mathcal{R}$-symmetric.

\item\label{enu:bifurcation}
At $\sigma=\sigma_{0}$ there is a supercritical
pitchfork-type bifurcation corresponding to the breaking of the symmetry
$\mathcal{R}$. More precisely, for $\sigma\in[\sigma_0,500]$, in
addition to $\bUsigma$, there exists two axi-symmetric solutions $\bUsigma+\bVsigma$
and $\bUsigma+\mathcal{R}\bVsigma$ of \eqref{ns-scale} where $\bVsigma=\bzero$
for $\sigma=\sigma_{0}$ and $\bVsigma$ is not $\mathcal{R}$-symmetric
(hence non trivial) for $\sigma>\sigma_{0}$.
\end{enumerate}
\end{result}
In particular, this suggests that the solutions of the Navier–Stokes equations
are not unique on any time-interval for large initial data in the Lorentz space
$L^{3,\infty}$. This would mean, that the smallness assumption required by
\citet[Theorem 8.2]{Lemarie-Rieusset2016} for proving the local well-posedness
for initial data in $L^{3,\infty}$ is not technical, but reflect the actual nature
of the equations. The same conclusion holds for the result by
\citet{Koch-Tataru2001} for initial data in ${\mathrm{BMO}}^{-1}$.\\

The scale-invariant solutions have infinite energy, however, by following
the ideas of \citet[Theorem 1.2]{Jia-Areincompressible3d2015},
the different self-similar solutions can be localized:

\begin{thm}
\label{thm:localization}
Assume that \eqref{ns-scale} exhibits the same solution behavior as observed
in the above reported numerical results. Then there exists $T>0$ and
two different axi-symmetric Leray–Hopf solutions of \eqref{ns-cauchy} on $(0,T)$
with the same compactly supported axi-symmetric initial datum
$\bu_{0}\in C^{\infty}(\mathbb{R}^{3}\setminus\{\bzero\})$
with $\bu_{0}(\bx)=O(|\bx|^{-1})$ near the origin. Moreover,
these two Leray–Hopf solutions are smooth for $t\in (0,T)$
and belong to $L^{p}(0,T;L^{q}(\mathbb{R}^{3}))$ for any $p,q$ with
\begin{equation}
\frac{2}{p}+\frac{3}{q}>1\qquad\text{and}\qquad q\geq2\,.\label{eq:region-non-uniqueness}
\end{equation}
\end{thm}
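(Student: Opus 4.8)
The plan is to reduce everything to the assumed non-uniqueness of scale-invariant solutions by a localization argument in the spirit of \citet[Theorem 1.2]{Jia-Areincompressible3d2015}. Fix $\sigma>\sigma_{0}$, so that by the assumed behavior there are two distinct scale-invariant solutions sharing the datum $\sigma\ba_{0}$, namely the $\mathcal{R}$-symmetric one $\bu_{*}^{(1)}(t,\bx)=t^{-1/2}\bUsigma(\bx/\sqrt{t})$ and the symmetry-broken one $\bu_{*}^{(2)}(t,\bx)=t^{-1/2}(\bUsigma+\bVsigma)(\bx/\sqrt{t})$. Both are smooth for $t>0$, their profiles lie in $\mathcal{U}$, and they differ by $\bVsigma\in\mathcal{D}\subset L^{2}\cap L^{4}$. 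First I would localize the common datum: since $\ba_{0}$ is pure swirl and axi-symmetric, multiplying by a radial cutoff $\chi$ equal to $1$ near the origin and supported in a ball preserves the divergence-free and axi-symmetry properties automatically, so $\bu_{0}:=\chi\,\sigma\ba_{0}$ is axi-symmetric, divergence-free, smooth away from the origin, compactly supported, coincides with $\sigma\ba_{0}=O(|\bx|^{-1})$ near the origin, and lies in $L^{2}\cap L^{3,\infty}$. Crucially, both scale-invariant solutions take the \emph{same} value $\sigma\ba_{0}$ as $t\to0$, so the localized datum $\bu_{0}$ is common to the two branches.

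Next I would construct, for each $i\in\{1,2\}$, a Leray–Hopf solution $\bu^{(i)}$ of \eqref{ns-cauchy} with datum $\bu_{0}$ by perturbing off the corresponding scale-invariant solution. Writing $\bu^{(i)}=\bu_{*}^{(i)}+\bw^{(i)}$, the correction $\bw^{(i)}$ solves the Navier–Stokes system linearized about $\bu_{*}^{(i)}$ (retaining the quadratic term $\bw^{(i)}\bcdot\bnabla\bw^{(i)}$) with initial datum $\bw^{(i)}(0)=(\chi-1)\sigma\ba_{0}$. The decisive structural point is that this initial perturbation \emph{vanishes} near the origin (where $\chi\equiv1$) and equals $-\sigma\ba_{0}\sim-|\bx|^{-1}$ at infinity, so it exactly cancels the slowly decaying tail of $\bu_{*}^{(i)}$; thus $\bu^{(i)}(0)=\bu_{0}\in L^{2}$ even though $\bu_{*}^{(i)}$ has infinite energy. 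Because of this infinite-energy reference the natural framework is that of local Leray (Lemarié–Rieusset) solutions: I would solve for $\bw^{(i)}$ in $L^{\infty}_{t}L^{2}_{\loc}\cap L^{2}_{t}\dot H^{1}_{\loc}$ on a short interval $(0,T)$, using the decay \eqref{bound-U} of $\bu_{*}^{(i)}$ to control the drift and stretching terms $\bu_{*}^{(i)}\bcdot\bnabla\bw^{(i)}+\bw^{(i)}\bcdot\bnabla\bu_{*}^{(i)}$ in a local energy inequality, and then verify, thanks to the tail cancellation, that $\bu^{(i)}=\bu_{*}^{(i)}+\bw^{(i)}$ is a genuine Leray–Hopf solution (globally finite energy, satisfying the energy inequality).

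The two solutions are then distinct: near the origin and for small $t$ the localization error $\bw^{(i)}$ is negligible compared with the scale-invariant part, so $\bu^{(2)}-\bu^{(1)}\approx\bu_{*}^{(2)}-\bu_{*}^{(1)}=t^{-1/2}\bVsigma(\bx/\sqrt{t})\not\equiv\bzero$; equivalently, the radial cutoff preserves $\mathcal{R}$, so $\bu^{(1)}$ is $\mathcal{R}$-symmetric while $\bu^{(2)}$ inherits the broken symmetry of $\bVsigma$. Smoothness for $t>0$ follows from parabolic regularity: $\bu_{*}^{(i)}$ is smooth and $\bw^{(i)}$ satisfies a parabolic system with smooth coefficients away from $t=0$, so a bootstrap yields $\bu^{(i)}\in C^{\infty}((0,T)\times\mathbb{R}^{3})$. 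The integrability \eqref{region-non-uniqueness} is governed by the near-origin/short-time behavior, which is modeled on the self-similar scaling: for a profile in $\mathcal{U}$ one has $\|\bu_{*}^{(i)}(t)\|_{L^{q}}=t^{(3/q-1)/2}\|\bU\|_{L^{q}}$ for $q>3$, and $\int_{0}^{T}t^{p(3/q-1)/2}\,\rd t<\infty$ is precisely the subcritical Serrin-type condition $\tfrac{2}{p}+\tfrac{3}{q}>1$; the compactly supported finite-energy correction $\bw^{(i)}$ then accounts for the remaining range $q\ge2$ through the Leray–Hopf bounds.

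The main obstacle is the short-time stability estimate for $\bw^{(i)}$ in the local-energy framework. Because the reference solution has infinite energy one cannot argue by a global $L^{2}$ estimate; instead one must propagate a local energy bound while showing both that the far-field initial perturbation $(\chi-1)\sigma\ba_{0}$ does not disturb the near-origin structure on $(0,T)$ and that the resulting field is admissible as a Leray–Hopf solution. Controlling the linearized terms of $\cL$-type uniformly in the local energy norm using only the decay \eqref{bound-U}, and closing the estimate for $T$ small, is where the real work lies; this is exactly the step carried out by \citet{Jia-Areincompressible3d2015} in their setting, which here must be adapted to the axi-symmetric, $\mathcal{R}$-symmetry-breaking configuration.
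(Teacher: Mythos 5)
Your overall skeleton matches the paper's: cut off the tail of $\sigma\ba_{0}$ to get a compactly supported $L^{2}$ datum, construct for each self-similar solution a correction $\bw^{(i)}$ solving the perturbed system with datum equal to the (pure-tail) cutoff error, conclude distinctness because $\Vert\bu_{*}^{(1)}(t)-\bu_{*}^{(2)}(t)\Vert_{L^{4}}=t^{-1/8}\Vert\bVsigma\Vert_{L^{4}}$ blows up while the corrections stay bounded, and get the integrability from self-similar scaling. However, the central analytic step is gapped. You propose to build $\bw^{(i)}$ by a local-energy (Lemarié–Rieusset) argument, controlling the drift and stretching terms $\bu_{*}^{(i)}\bcdot\bnabla\bw^{(i)}+\bw^{(i)}\bcdot\bnabla\bu_{*}^{(i)}$ ``using the decay \eqref{bound-U}''. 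This cannot close: those terms are scale-critical and \emph{large}. Since $|\bu_{*}^{(i)}|\sim\sigma/|\bx|$, the stretching term in any (local) energy identity is of size $C\sigma\int|\bnabla\bw^{(i)}|^{2}$ by Hardy's inequality, and with $\sigma\approx292$ it swamps the dissipation; shrinking $T$ does not help because these terms are invariant under the parabolic scaling. This is exactly why the paper does \emph{not} use energy estimates for the correction but invokes \thmref{ns-singular} (Jia–Šverák's Theorem 1.2 restricted to axi-symmetric fields), an $L^{4}$-based perturbation theory in the space $X_{T}$ whose hypotheses are spectral: $\sigma(\cL(\bUsigma))\subset\{\Re z>-\beta\}$ with $\beta<\tfrac{1}{8}$, together with smallness of $\Vert\bVsigma\Vert_{\mathcal{V}}+\Vert\bnabla\bVsigma\Vert_{\mathcal{V}}$. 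Your proposal never mentions the spectrum of $\cL(\bUsigma)$; in particular you fix an arbitrary $\sigma>\sigma_{0}$, whereas the paper must take $\sigma$ close to $\sigma_{0}$ precisely so that the crossing eigenvalue in \eqref{numerics-spectrum} satisfies $\lambda_{\sigma}>-\tfrac{1}{8}$ and so that $\bVsigma$ is small. The spectral information from the numerics is the linchpin that makes the localization theorem applicable; dropping it leaves the existence of $\bw^{(i)}$ unjustified, and attributing this step to Jia–Šverák misstates what their theorem requires.

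Two further problems. First, your correction is neither compactly supported nor of finite energy: $(\chi-1)\sigma\ba_{0}\sim|\bx|^{-1}$ at infinity lies in $L^{4}$ but not in $L^{2}$, so ``Leray–Hopf bounds'' for $\bw^{(i)}$ are not available; only the sum $\bu_{*}^{(i)}+\bw^{(i)}$ has finite energy (the paper obtains its Leray–Hopf property from Lemma 2.2 of Jia–Šverák's $L^{3}$ paper, not from bounds on $\bw^{(i)}$ alone). Second, and relatedly, the integrability region \eqref{region-non-uniqueness} cannot be reached from local-energy bounds: energy-type bounds only give the Leray–Hopf region \eqref{region-leray}, i.e.\ $\tfrac{2}{p}+\tfrac{3}{q}\geq\tfrac{3}{2}$, which is strictly smaller than the claimed $\tfrac{2}{p}+\tfrac{3}{q}>1$, $q\geq2$. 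The paper gets the larger region from the $X_{T}$ bounds — $\bw^{(i)}\in L^{\infty}(0,T;L^{4})$ with $t^{1/2}\bnabla\bw^{(i)}\in L^{\infty}(0,T;L^{4})$, hence $\bw^{(i)}\in L^{p}(0,T;L^{\infty})$ for $p<2$ and the region \eqref{reqion-w} by interpolation — combined with the splitting $\mathbb{R}^{3}=B\cup B^{c}$ and the decay \eqref{bound-U} of the self-similar part. So even the final integrability claim forces you back into the $L^{4}$ mild-solution framework that your proposal replaced by local energy estimates.
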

We note that a Leray–Hopf solution on $(0,T)$
belongs to $L^{p}(0,T;L^{q}(\mathbb{R}^{3}))$ for all
\begin{equation}
\frac{2}{p}+\frac{3}{q}\geq\frac{3}{2}\qquad\text{and}\qquad q\in[2,6]\,.\label{eq:region-leray}
\end{equation}
by the standard Sobolev embedding.
If a Leray–Hopf solution belongs to the Serrin class $L^{p}(0,T;L^{q}(\mathbb{R}^{3}))$
with
\begin{equation}
\frac{2}{p}+\frac{3}{q}\leq1\,,\label{eq:region-serrin}
\end{equation}
then the solution is unique and smooth \citep{Prodi-uniqueness1959,Serrin-initialvalueproblem1963,Ladyzhenskaya-uniquenessandsmoothness1967,Escauriaza-L3solutions2003}.
\Thmref{localization} shows that the Serrin uniqueness criterion is essentially optimal, since non-uniqueness holds for $p$ and $q$
satisfying \eqref{region-non-uniqueness}. The Leray–Hopf and Serrin classes are represented on \figref{region}.

Our main focus in this paper is on the numerics, which are presented in \secref{methods,results}.
The proofs of \thmref{spectrum-L,continuation-bifurcation,localization} are sketched
in \secref{spectrum-LU,bifurcation,localization} respectively and,
in general, go along the lines similar to those in \cite{Jia-Areincompressible3d2015}.

\medskip\noindent{\bfseries Notations.} The spaces $\mathcal{U}$, $\mathcal{V}$, and $\mathcal{D}$
are defined by \eqref{def-U}, \eqref{def-V}, and \eqref{def-D}
respectively, and the subspaces of axi-symmetric vector fields are
denoted by $\mathcal{U}_{\axi}$, $\mathcal{V}_{\axi}$,
and $\mathcal{D}_{\axi}$ respectively.
The operators $F$ and $\cL$ are respectively defined by \eqref{def-F} and \eqref{def-L}.
The cylindrical coordinates are denoted by $(r,\theta,z)$.
If $\alpha$ is a multi-index, we denote by $\proj_{\alpha}$ the projection
on the elements of $\alpha$, $\proj_{\alpha}\bv=\sum_{i\in\alpha}\left(\bv\bcdot\be_{i}\right)\be_{i}$.
For example, $\proj_\theta\bu=u_\theta\be_\theta$ and $\proj_{rz}\bu=u_r\be_r+u_z\be_z$.

\begin{figure}[h]
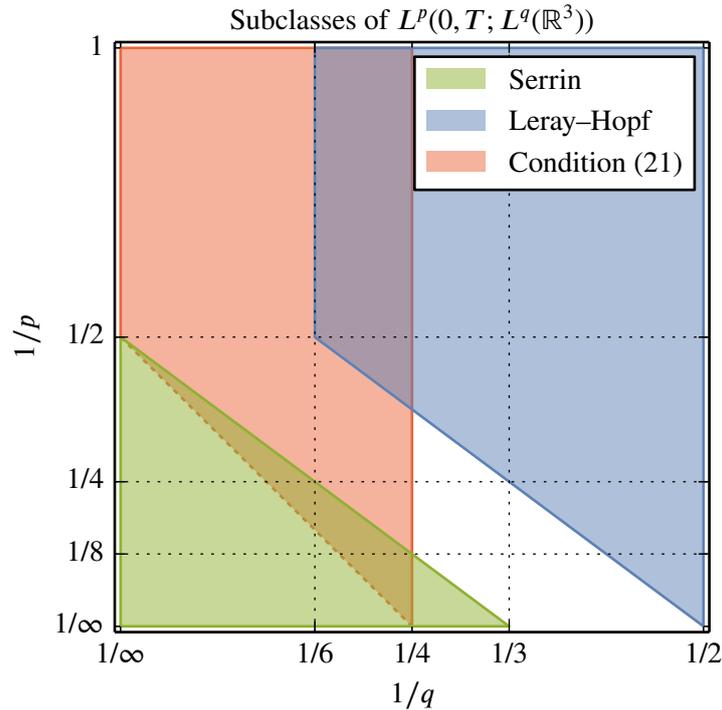

\includefigure{plot_region}

\caption{\label{fig:region}Different subclasses of the space $L^{p}(0,T;L^{q}(\mathbb{R}^{3}))$.
The Leray–Hopf solutions belong to the blue region characterized by
\eqref{region-leray}. The Serrin criterion for uniqueness and regularity
is given by the green region defined by \eqref{region-serrin}.
The corrector used in the proof of \thmref{localization} to localize a self-similar
solution belongs to the space $X_{T}$ defined by \eqref{def-X_T},
hence to the red region characterized by \eqref{reqion-w}.
The localization of the numerical solutions found belong to the complement
of the green region, hence showing that the Serrin uniqueness criterion
is sharp in these spaces.}
\end{figure}

\section{\label{sec:methods}Numerical methods}

The restriction to the subspace of axi-symmetric solutions allows to
perform the numerical simulations in a two-dimensional domain in the
$(r,z)$ coordinates. We work in the following computational domain
\[
\Omega(R)=\bigl\{(r,z)\in\mathbb{R}^{2}:0\leq r\leq R\;\text{and}\;|z|\leq R\bigr\},
\]
and divide its boundary into two disjoint parts, $\partial\Omega(R)=A(R)\cup\Gamma(R)$,
where
\[
A(R)=\bigl\{(0,z)\in\mathbb{R}^{2}:|z|<R\bigr\}
\]
is the axis boundary and $\Gamma(R)$ the artificial boundary. As
it will become clear later, when the parameter $\sigma$ is increasing,
the domain as to be also increasing in order to keep the region of
interest into the computational domain. Here we choose to work in
the domain $\Omega(R_{\sigma})$, where $R_{\sigma}=20\scale_{\sigma}$
with $\scale_{\sigma}^{2}=1+\frac{\sigma}{4}$. This specific factor
was chosen such that visually the interesting phenomena are approximately
located in the same region of the computational domain for all values
of $\sigma$.

The cylindrical coordinates require the following boundary condition on the axis,
\[
\bUsigma\bcdot\be_{r}=0\qquad\text{and}\qquad\bUsigma\bcdot\be_{\theta}=0\qquad\text{on}\qquad A(R_{\sigma})\,.
\]
The condition \eqref{ns-scale-bc} naturally leads to the following boundary
condition on $\Gamma(R_{\sigma})$,
\[
\bUsigma=\sigma\ba_{0}\qquad\text{on}\qquad\Gamma(R_{\sigma})\,.
\]
The reader not interested in the implementation of the numerical simulations
can safely jump to \secref{results} for the description of the numerical
results.

\subsection{\label{sub:methods-discretization}Discretization}

The numerical simulations are performed by the finite elements method
with the package FEniCS \citep{Logg-FENICS2012,Alnes-FENICS2015}.

The domain $\Omega(R)$ is first discretized into $2n^{2}$ squares
each of them split into two triangles, as shown on \figref{mesh}a.
To increase the precision near the origin, this discretization is
refined in the square $r\leq R/2$ and $|z|\leq R/4$, which leads
to the discretization $\Omega(R,n)$ represented on \figref{mesh}b.
As already said, we need to work in a domain growing as $\sigma$
is increasing. In order to keep the mesh fixed during the continuation
in $\sigma$, we instead choose to rescale the equations \eqref{ns-scale}
in $\Omega(R_{\sigma})$ by a factor $\scale_{\sigma}^{2}=1+\frac{\sigma}{4}$.
That way, the the domain $\Omega(R_{\sigma})$ is transformed into
the domain $\Omega(20)$ and the same mesh can be used for all the
values of $\sigma$.

The following weak formulation of \eqref{ns-scale-eq} is used
\[
\begin{aligned}\bigl(\bnabla\bUsigma,\bnabla\bvphi\bigr)-\frac{1}{2}\bigl(\bx\bcdot\bnabla\bUsigma,\bvphi\bigr)-\frac{1}{2}\bigl(\bUsigma,\bvphi\bigr)+\bigl(\bUsigma\bcdot\bnabla\bUsigma,\bvphi\bigr)+\bigl(P_{\!\sigma},\bnabla\bcdot\bvphi\bigr) & =0\,,\\
\bigl(\bnabla\bcdot\bU_{\sigma},q\bigr) & =0\,,
\end{aligned}
\]
where $\bigl(\cdot,\cdot\bigr)$ denotes the scalar product on $L^{2}(\Omega(R_{\sigma}))$
and $\bvphi$ and $q$ are test functions. The restriction of this
weak formulation to axi-symmetric is transformed into cylindrical coordinates
and then discretized with Lagrange quadratic polynomials (P2 elements)
for $\bUsigma$ and linear polynomials (P1 elements) for $P_{\!\sigma}$.

\begin{figure}[p]
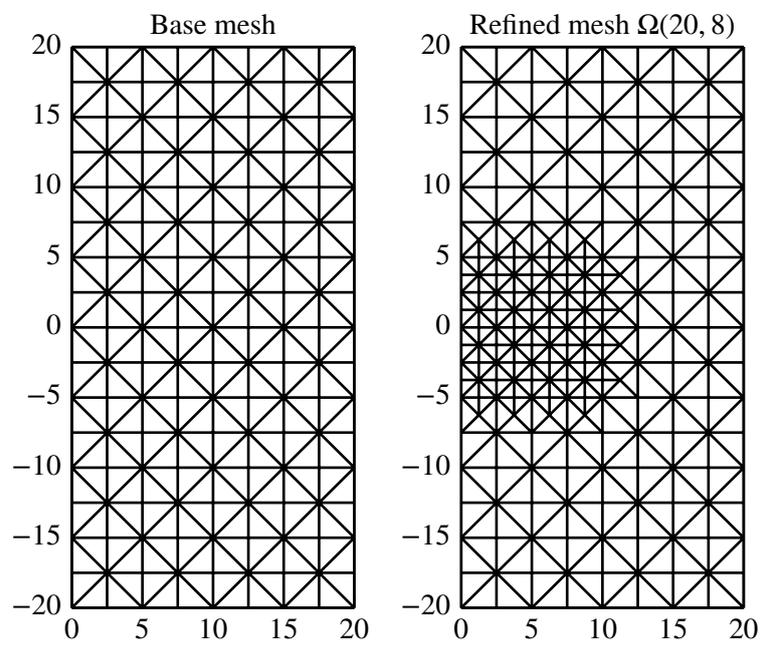

\includefigure{plot_mesh}\caption{\label{fig:mesh}Construction of the discretization $\Omega(R,n)$
for $R=20$ and $n=8$. First the domain $\Omega(R)$ is discretized
into $2n^{2}$ squares and then refined near the origin. }
\end{figure}

\subsection{\label{sub:methods-continuation}Continuation algorithm for $\bUsigma$}

In a first step, a continuation method is used in $\sigma$ on the
domain $\Omega(R_{\sigma},300)$. The steps of the continuation method
are chosen as $0.1$ for $0\leq\sigma\leq2$, $0.5$ for $2\leq\sigma\leq50$
and to $1$ for $50\leq\sigma\leq500$. At each step the solution
from the previous step is used as an initial datum for a Newton's method.
This Newton's method typically converges in two or three steps. This
method was used because adjusting the step such that only one Newton's
iteration leads to convergence is much too slow. In a second step,
the solution $\bUsigma$ founded on $\Omega(R_{\sigma},300)$ is interpolated
into the finer mesh $\Omega(R_{\sigma},600)$. From this initial guess,
only one Newton's iteration leads to a converged solution on $\Omega(R_{\sigma},600)$
in general. All the Newton's iterations are performed with the MUMPS
\citep{Amestoy-MUMPS2000} linear solver through PETSc \citep{Balay-PETSC2016}
binding.

\subsection{\label{sub:methods-eigensolver}Eigenvalues solver}

On $\Omega(R_{\sigma})$, the eigenvalue problem of $\cL(\bUsigma)$
is given by
\begin{align*}
-\Delta\bphi-\frac{\bx}{2}\bcdot\bnabla\bphi-\frac{1}{2}\bphi+\bUsigma\bcdot\bnabla\bphi+\bphi\bcdot\bnabla\bUsigma+\bnabla p & =\lambda\bphi\,, & \bnabla\bcdot\bphi & =0\,,
\end{align*}
with the boundary conditions
\[
\bphi\bcdot\be_{r}=0\qquad\text{and}\qquad\bphi\bcdot\be_{\theta}=0\qquad\text{on}\qquad A(R_{\sigma})\,,
\]
and
\[
\bphi=\bzero\qquad\text{on}\qquad\Gamma(R_{\sigma})\,.
\]
These equations are solved in the class of axi-symmetric $\bphi$ and
the discretization used is $\Omega(R_{\sigma},600)$ in the same way
as explained in \subref{methods-discretization}. Due to the truncation
of the domain, only the eigenvectors $\bv$ of $\cL(\bUsigma)$
in $\mathbb{R}^{3}$ having a relatively fast decay at infinity will be
found. For a local equation in a similar situation it might be reasonable to expect that eigenvectors with exponential decay exist. However, due to non-local effect in the Navier–Stokes equations, the fastest decay one can expect in our problem here is probably $O(|\bx|^{-4})$, as the terms generated by the original non-linearity need to be projected on divergence-free fields, which creates long-range terms.
Therefore, imposing a Dirichlet boundary conditions on the eigenvectors deforms the problem slightly. In practical calculations this effect did not seem to be significant. For a computer-assisted proof this issue would of course have to be carefully addressed. One possibility for this would be to work with the asymptotic expansions at the spatial infinity, as we already discussed above.

In a first step the 36 eigenvalues closest to the real axis were computed
for each values of $\sigma$ by using the Krylov–Schur algorithm \citep{Hernandez-KrylovSchur2009}
implemented in SLEPc \citep{Hernandez-SLEPc2005}. Instead of choosing
a random initial vector, a linear combination of the eigenvectors
founded at the previous step is used, even if the gain in the execution
time is not very large.

In a second step, we track the eigenvalues closest to the real axis
by a continuation method back to $\sigma=0$ in order to assert that
they are not spurious and actually linked to the eigenvalues at $\sigma=0$.
For this continuation by used the Newton's method \citep{Rall-Newton1961,Anselone-Newton1968}
by viewing the eigenvalue problem as a non-linear one with a constraint
on the size of the eigenvector.

\subsection{\label{sub:methods-bifurcation}Bifurcation from the $\mathcal{R}$-symmetric
solution}

In the scenario where a real eigenvalue is crossing the real axis
at $\sigma_{0}$, as supposed in the hypotheses of \thmref{continuation-bifurcation},
then another solution of \eqref{ns-scale} should bifurcate from $\bUsigma$
at $\sigma=\sigma_{0}$. This new branch of solution can be also found
numerically. For a value of $\sigma$ slightly bigger than $\sigma_{0}$,
Newton's iterations are performed with the initial guess $\bUsigma+\alpha\bphi$,
where $\bphi$ is the eigenvector corresponding to the crossing eigenvalue
and $\alpha\in\mathbb{R}$ is some real parameter to be adjusted such
that the Newton's method converges. When $\alpha$ is well chosen,
the Newton's method converges to a solution $\bUsigma+\bVsigma$ different
from $\bUsigma$. Finally the continuation algorithm described in
\subref{methods-continuation} is used to determine the new branch
of solution $\bUsigma+\bVsigma$ for larger values of $\sigma$.

\section{\label{sec:results}Numerical results}

\subsection{Base solution $\bUsigma$}

Using the continuation algorithm described in \subref{methods-continuation},
an axi-symmetric and $\mathcal{R}$-symmetric solution $\bUsigma$
was found for $\sigma\in[0,500]$. This solution is represented on
the whole computational domain $\Omega(R_{\sigma},600)$ in \figref{rut,rurz}.
Near the vertical axis, the radial and azimuthal components of $\bUsigma$
behaves like $O(r)$ for small values of $r$ has required by the
smoothness of the solution. The solutions are $(-1)$-homogeneous on
a quite large region near the artificial boundary $\Gamma(R_{\sigma})$
as shown on \figref{decay_u,decay_gradu}. This means that the choice
of the size of the computational domain $\Omega(R_{\sigma})$ was
large enough. Near the origin, the solution is shown on \figref{urz_small,ut_small,rut_small}.
As shown on \figref{urz_small}, the streamlines projected on the
plane $\theta=0$ are closed, therefore the streamlines of the profile
$\bUsigma$ are given by tori as shown on \figref{stream}. The first numerical observation \vpageref{enu:steady} concerning the existence of $\bUsigma$ is shown.

\begin{figure}[p]
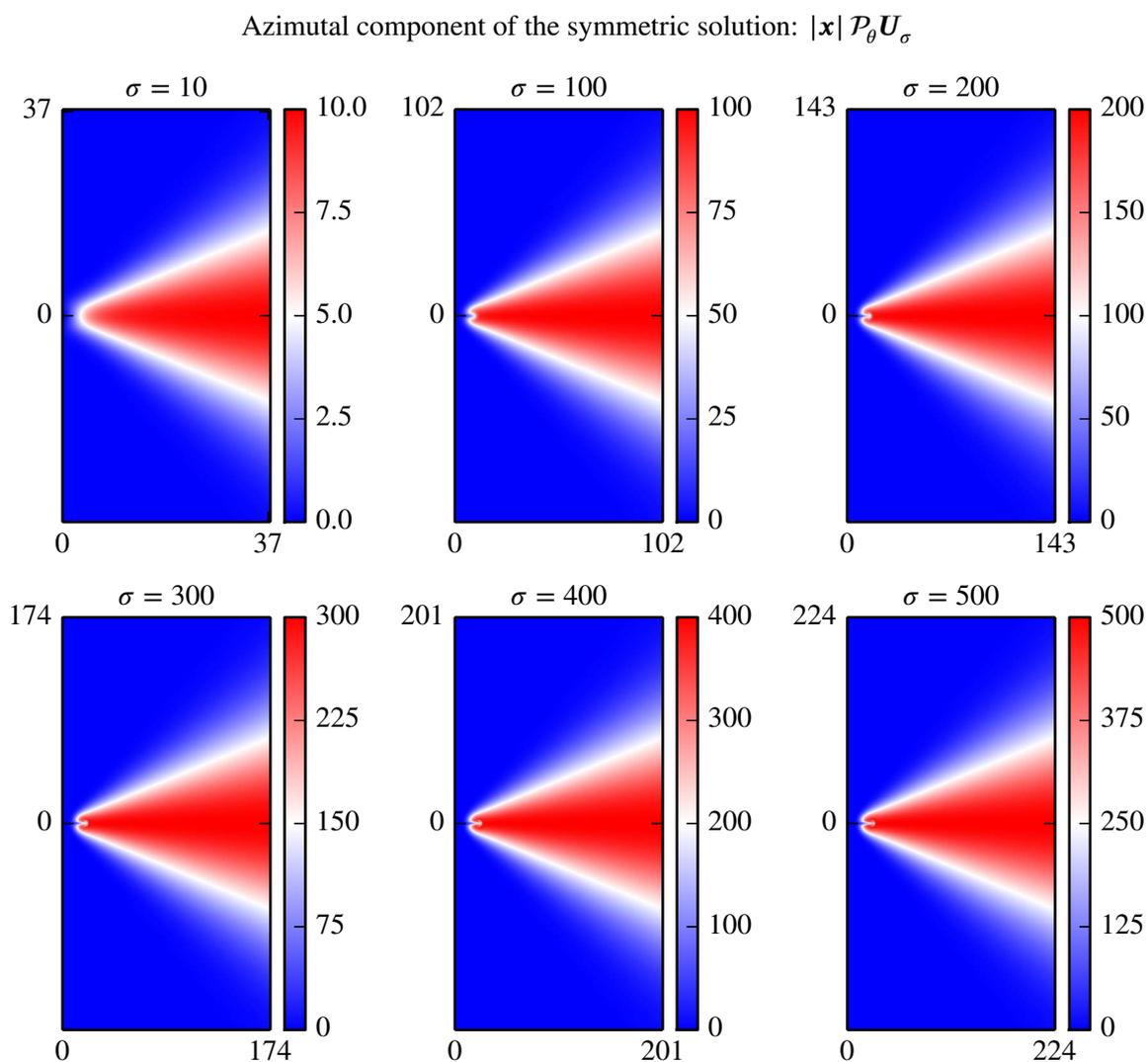

\includefigure{plot_rut}\caption{\label{fig:rut}Azimuthal component of the numerical solution $\bUsigma$
multiplied by $|\bx|$ on the whole computational domain $\Omega(R_{\sigma},600)$
for various $\sigma$. One can see that the choice of $\scale_{\sigma}$
is made such that the solution remains $(-1)$-homogeneous in most
of the computational domain except near the origin.}
\end{figure}

\begin{figure}[p]
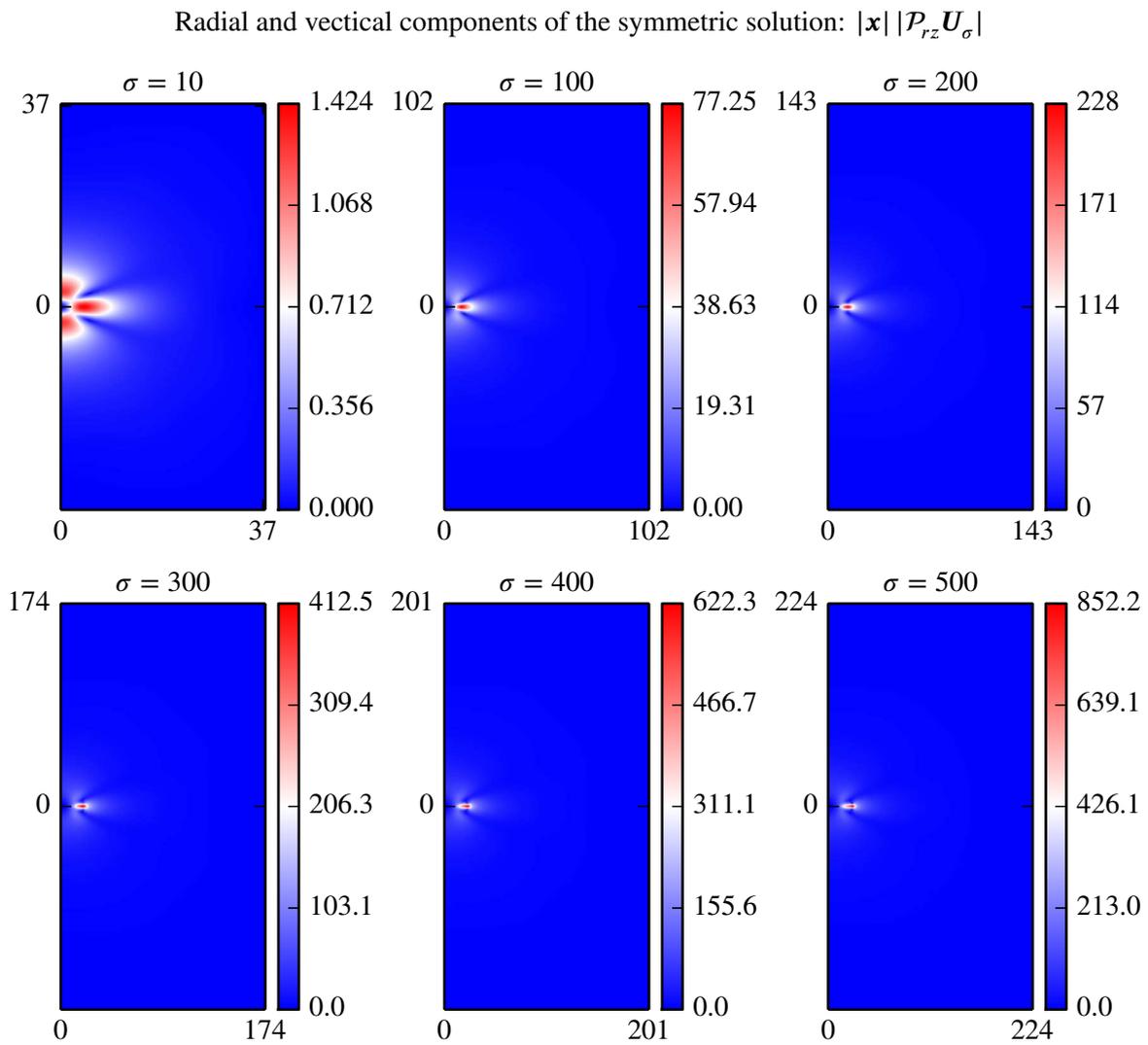

\includefigure{plot_rurz}\caption{\label{fig:rurz}Norm of the radial and vertical component of the
numerical solution $\bUsigma$ multiplied by $|\bx|$.
As expect since the boundary condition $\sigma\ba_{0}$ is
pure swirl, these two components decays like $|\bx|^{-3}$.}
\end{figure}

\begin{figure}[p]
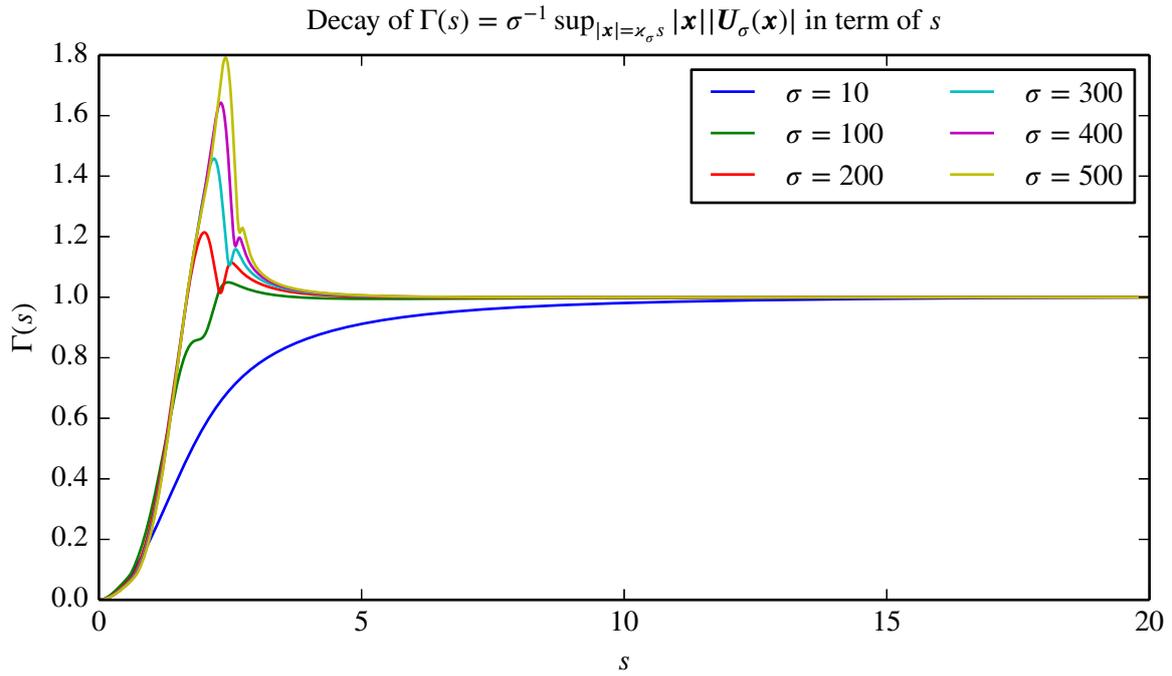

\includefigure{plot_decay_u}\caption{\label{fig:decay_u}Decay of the function $\Gamma(s)=\sigma^{-1}\sup_{|\bx|=\scale_{\sigma}s}|\bx||\bUsigma(\bx)|$
in term of $s\in(0,20)$ for various values of $\sigma$. The function
$\Gamma(s)$ is almost flat for $s\geq10$, so this means that the
computational domain is large enough, since the numerical solution
is already $(-1)$-homogeneous in a large region.}
\end{figure}

\begin{figure}[p]
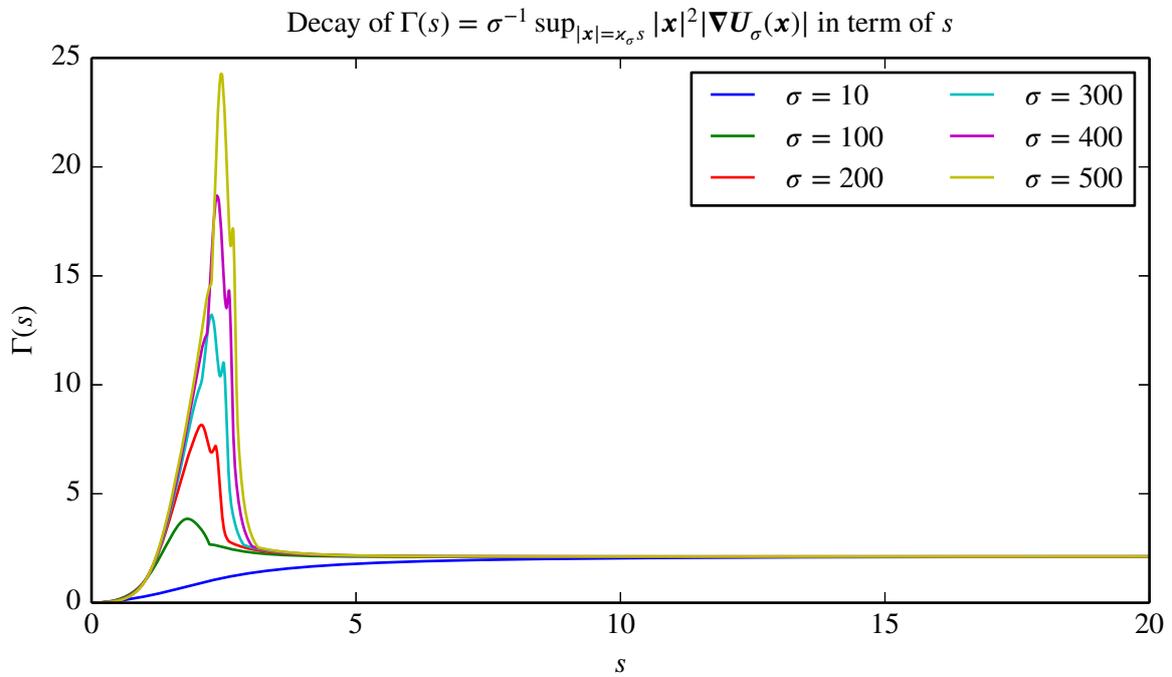

\includefigure{plot_decay_gradu}\caption{\label{fig:decay_gradu}Decay of the function $\Gamma(s)=\sigma^{-1}\sup_{|\bx|=\scale_{\sigma}s}|\bx|^{2}|\bnabla\bUsigma(\bx)|$
in term of $s\in(0,20)$ for various values of $\sigma$. We see that
$\bnabla\bUsigma$ is already $(-2)$-homogeneous on
almost half of the computational domain.}
\end{figure}

\begin{figure}[p]
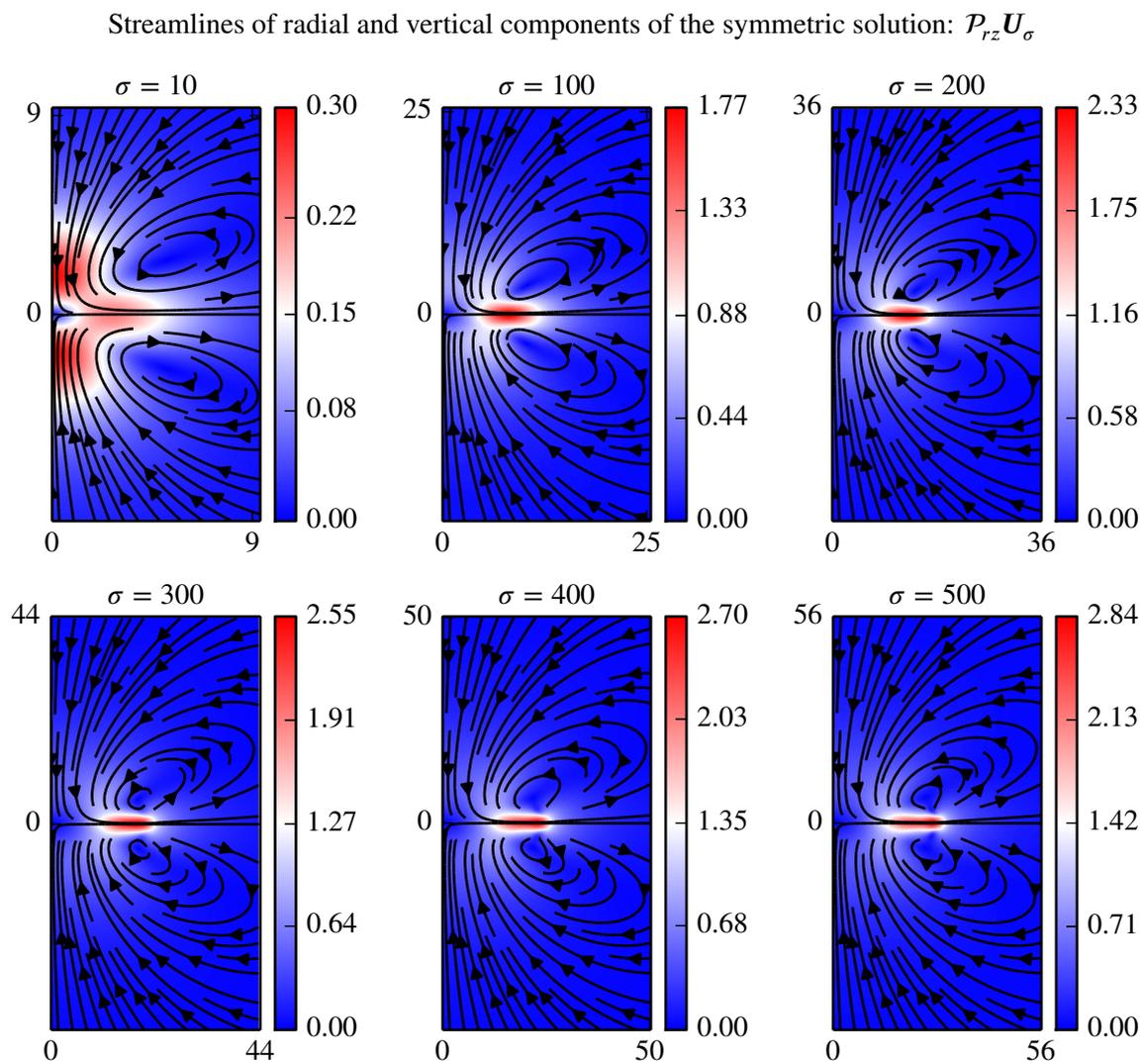

\includefigure{plot_urz_small}\caption{\label{fig:urz_small}Streamlines of the radial and vertical components
of $\bUsigma$ near the origin. We remark that the streamlines are closed.}
\end{figure}

\begin{figure}[p]
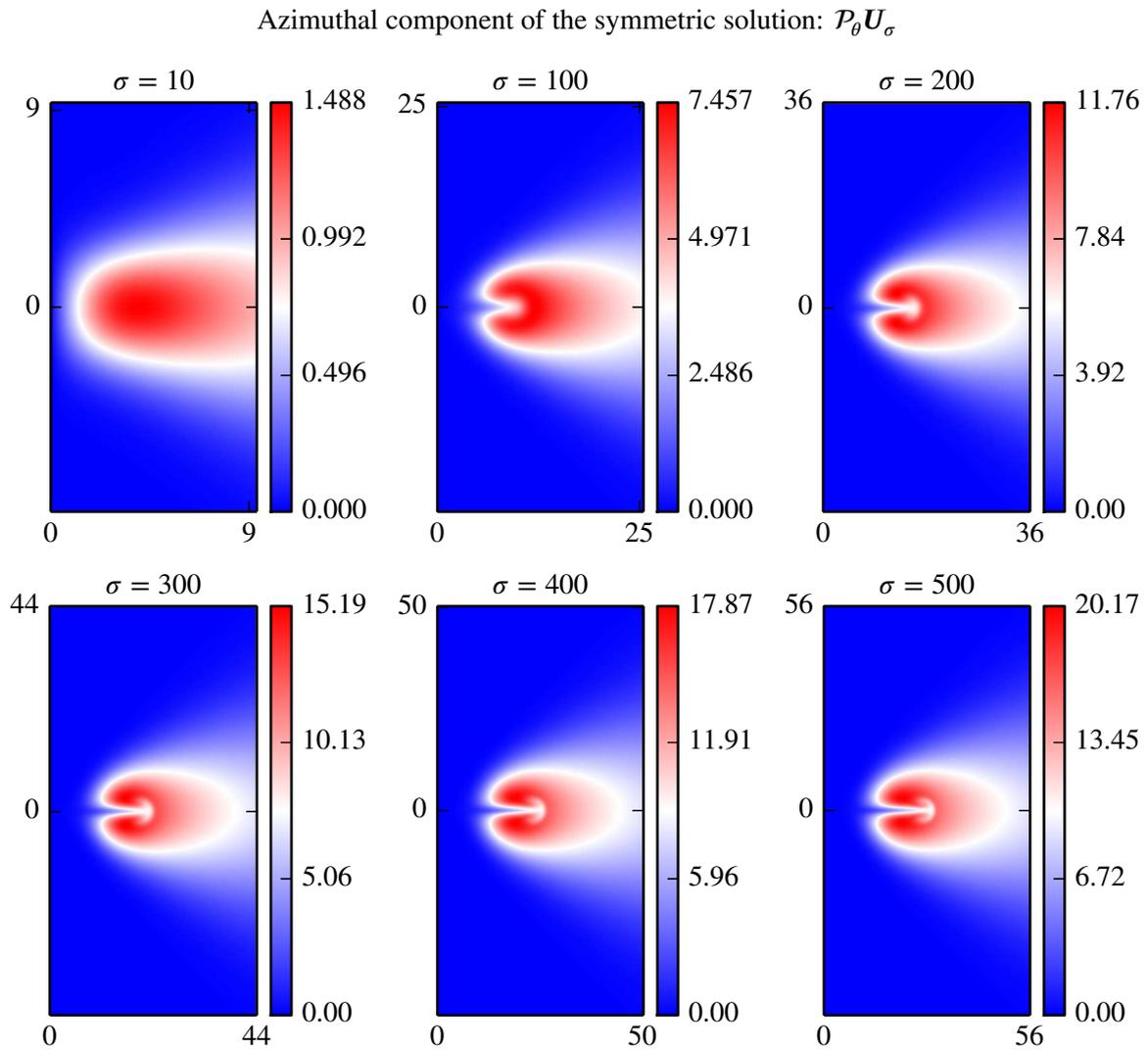

\includefigure{plot_ut_small}\caption{\label{fig:ut_small}Azimuthal component of the numerical solution
$\bUsigma$ near the origin.}
\end{figure}

\begin{figure}[p]
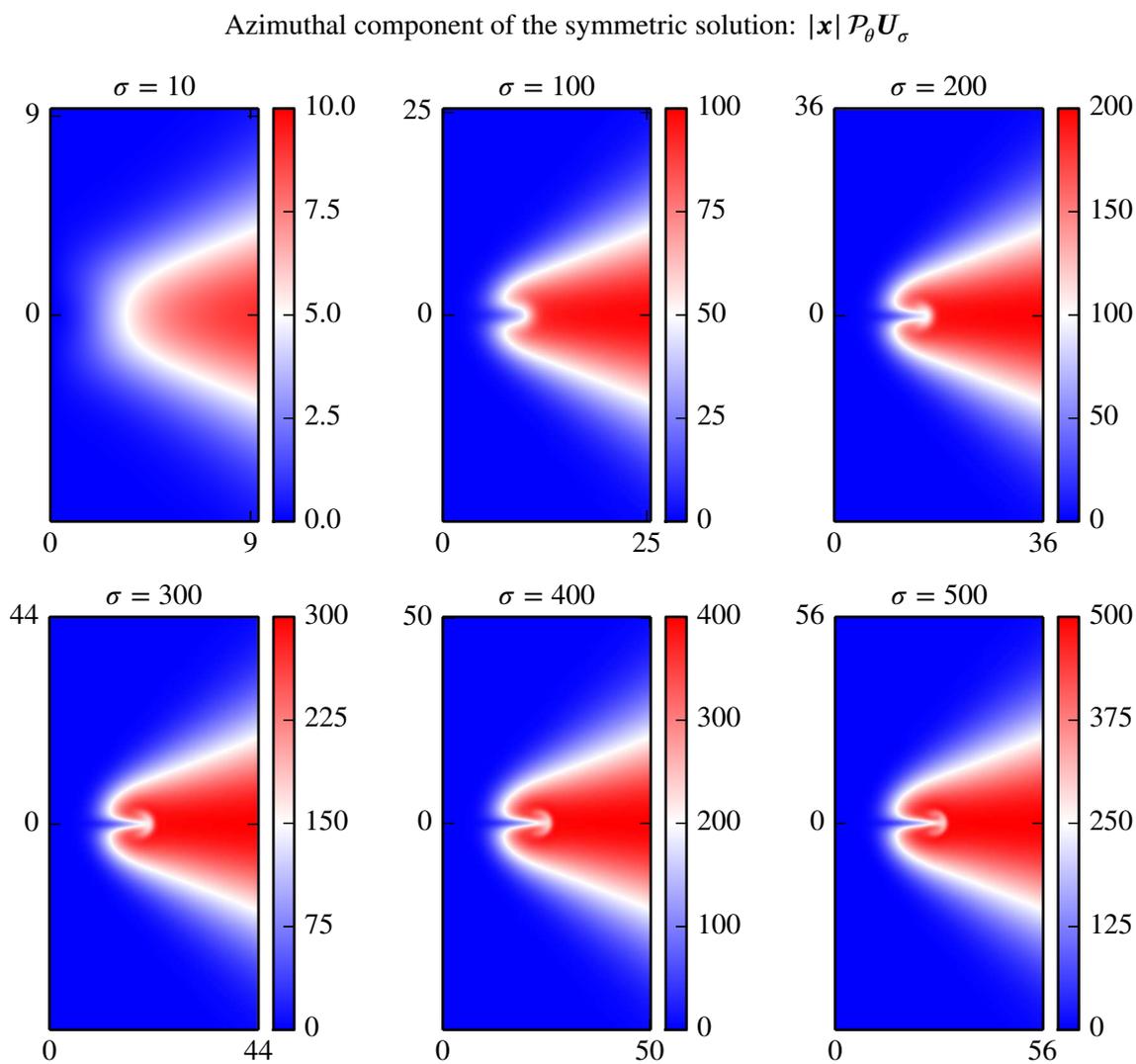

\includefigure{plot_rut_small}\caption{\label{fig:rut_small}Azimuthal component near the origin of the numerical
solution $\bUsigma$ multiplied by $|\bx|$.}
\end{figure}

\begin{figure}[p]
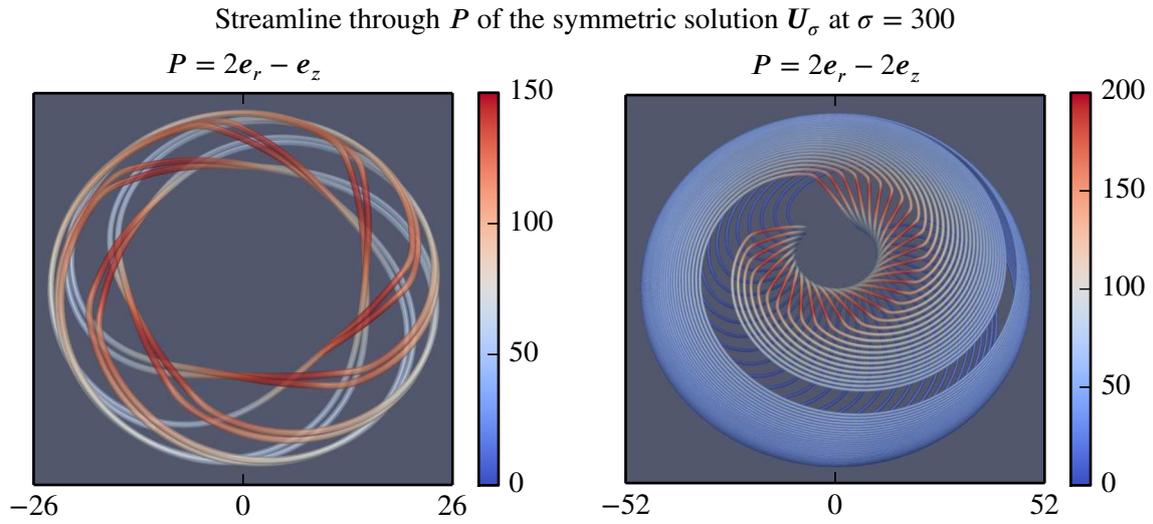

\includefigure{plot_stream_a300}\caption{\label{fig:stream}Streamline of $\bUsigma$ at $\sigma=300$
going through two different points. (a) the point is given by $r=2$
and $z=-1$, and in this case $\proj_{\theta}\bUsigma$ is quite
large, so the streamline is almost $\frac{2\pi}{7}$-periodic in $\theta$;
(b) the point is given by $r=2$ and $z=-2$, and $\proj_{\theta}\bUsigma$
is much smaller, the streamline is almost $2\pi$-periodic in $\theta$.}
\end{figure}

\subsection{Eigenvalues of $\cL(\bUsigma)$}

At $\sigma=0$, the eigenvalues found numerically are given up to
a very high precision by $\frac{3}{2}+\frac{n}{2}$ for $n\in\mathbb{N}$,
with multiplicity $n+1$ and correspond exactly to the discrete part
found in \thmref{spectrum-L} decaying like $\e^{-\left|\bx\right|/4}$.
The continuous part is not seen numerically due to the polynomial
decay $\left|\bx\right|^{-2\lambda}$ of the eigenvectors. The eigenvectors
found for $\sigma>0$ are also extremely well-localized, even if numerically
the rate cannot be precisely determined due to precision issues. The real part
of the eigenvalues closer to the real axis are represented on \figref{spectrum,spectrum_small}.
In particular a real eigenvalue crosses the real axis near $\sigma\approx292$
whereas all the other eigenvalues have a strictly positive real part
on the range $\sigma\in[0,500]$. By going back in $\sigma$, the
crossing eigenvalue merges with another real eigenvalues near $\sigma\approx12$
to form a pair of complex conjugate eigenvalues having a real part
close to two. The eigenvalues near the crossing are represented on
\figref{spectrum_crossing} whereas the eigenvector corresponding
to $\sigma=292$ is shown on \figref{eigen_a292} and is not $\mathcal{R}$-symmetric. Hence, the second numerical observation claimed \vpageref{enu:spectrum} is shown.

Interestingly, the eigenvalue $\lambda=1.5$ is unchanged with respect to $\sigma$. The explanation of this fact comes from the following
simple observation, for which we are indebted to a valuable discussion with Hao Jia. The equation \eqref{ns-scale-eq} for the profile
$\bUsigma$ leads to the following equation for its momentum $\bTsigma=\bx\bwedge\bUsigma$,
\[
-\Delta\bTsigma+2\bOmegasigma-\frac{\bx}{2}\bcdot\bnabla\bTsigma+\bUsigma\bcdot\bnabla\bTsigma+\bx\bwedge\bnabla P_{\sigma}=\bzero\,,
\]
where $\bOmegasigma=\bnabla\bwedge\bUsigma$ is the vorticity of $\bUsigma$.
Therefore, the eigenvalue problem $\cL(\bUsigma)\bphi=\lambda\bphi$
can be transformed into the following equation for the momentum $\btau=\bx\bwedge\bphi$,
\[
-\Delta\btau+2\bomega-\frac{\bx}{2}\bcdot\bnabla\btau+\bUsigma\bcdot\bnabla\btau+\bv\bcdot\bnabla\boldsymbol{T}_{\sigma}+\bx\bwedge\bnabla p=\lambda\btau\,,
\]
where $\bomega=\bnabla\bwedge\bphi$ is the vorticity of $\bphi$.
By integrating this last equation over $\mathbb{R}^{3}$, we obtain
the following relation after some integrations by parts,
\[
\frac{3}{2}\int_{\mathbb{R}^{3}}\btau=\lambda\int_{\mathbb{R}^{3}}\btau\,,
\]
which explains why the eigenvalue $\lambda=1.5$ is unchanged even
for large values of $\sigma$.

\begin{figure}[p]
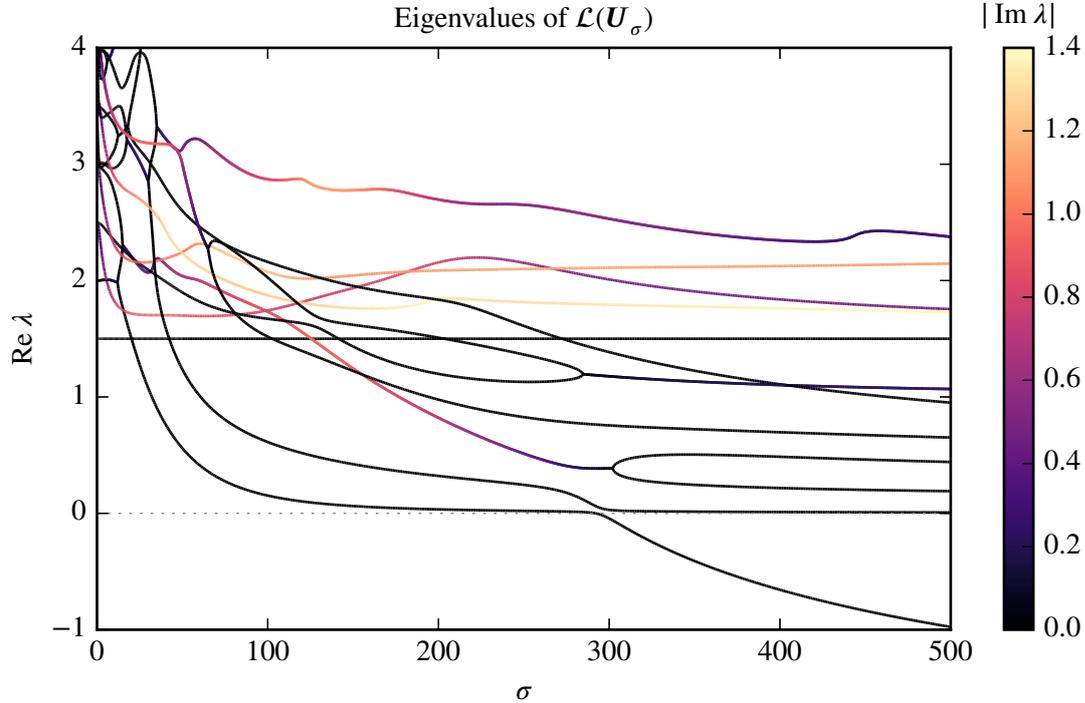

\includefigure{plot_spectrum}\caption{\label{fig:spectrum}Eigenvalues of $\cL(\bUsigma)$
for $\sigma\in[0,500]$. The color of the lines represents the absolute
value of the imaginary part of the eigenvalues. In order to keep the
plot readable, the calculated eigenvalues are not all represented,
but only the ones closest to the real axis.}
\end{figure}

\begin{figure}[p]
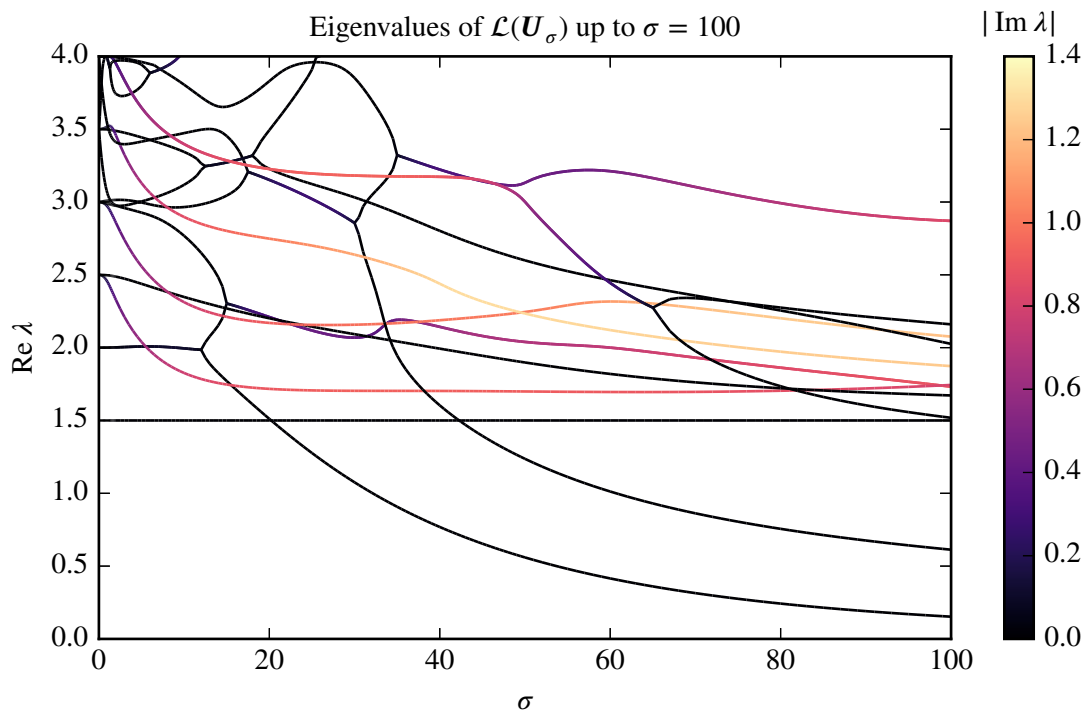

\includefigure{plot_spectrum_small}\caption{\label{fig:spectrum_small}Zoom of the eigenvalues of $\cL(\bUsigma)$
for $\sigma\in[0,100]$. At $\sigma=0$, the eigenvalues are given
by $\lambda=\frac{3}{2}+\frac{n}{2}$ for $n\in\mathbb{N}$ and their multiplicity
is $n+1$. Many different bifurcations occurs in $\sigma$.}
\end{figure}

\begin{figure}[p]
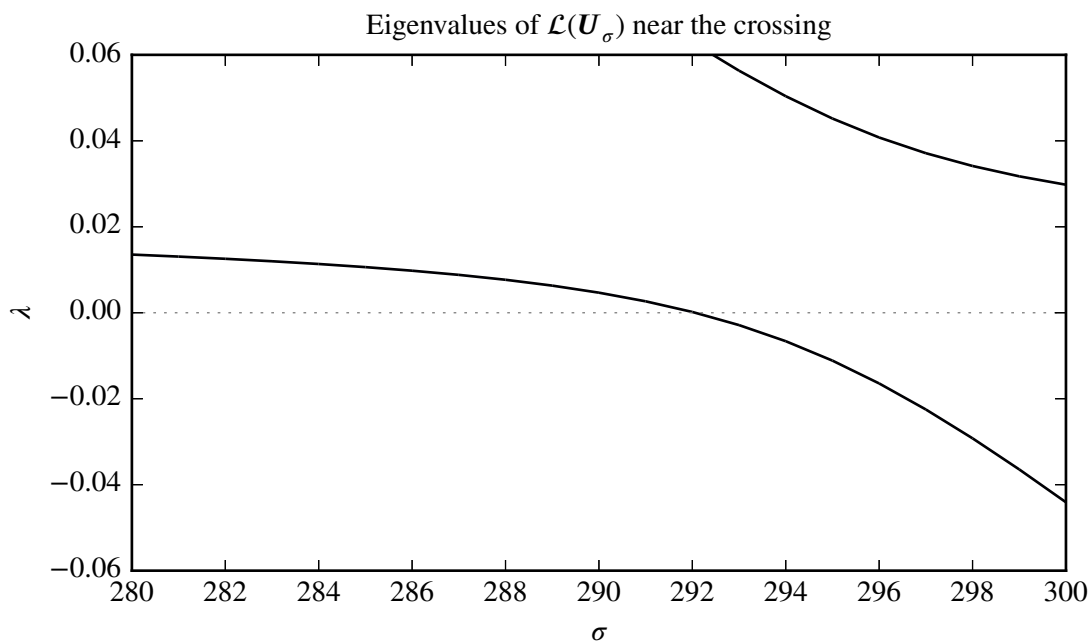

\includefigure{plot_spectrum_crossing}\caption{\label{fig:spectrum_crossing}Eigenvalues of $\cL(\bUsigma)$
near the crossing point $\lambda=0$. On this plot the two eigenvalues
are real.}
\end{figure}

\begin{figure}[p]
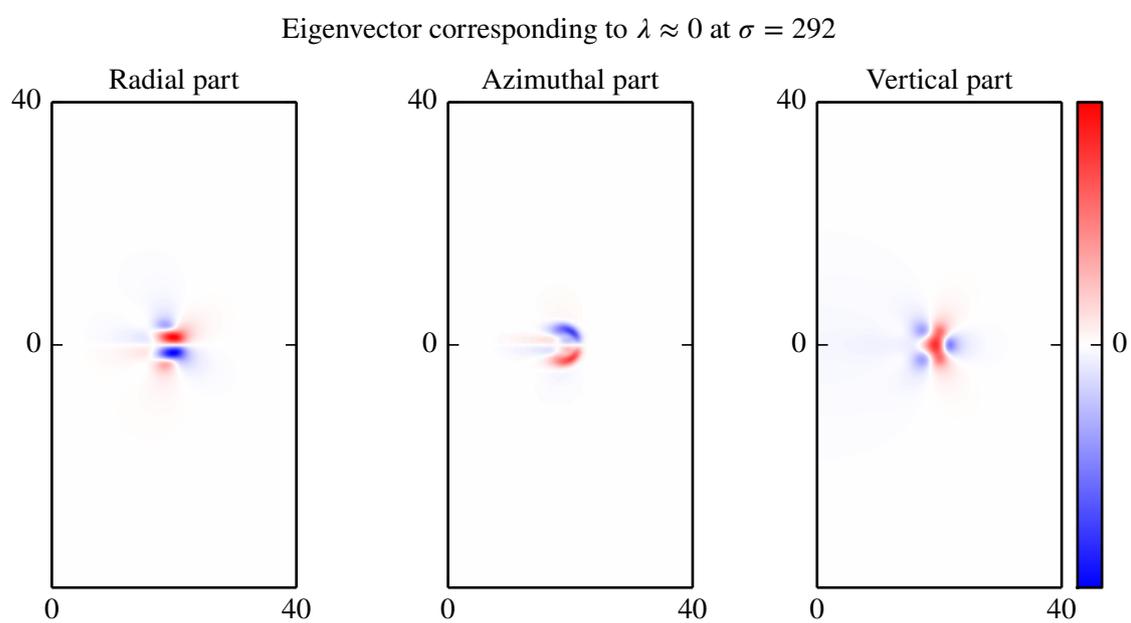

\includefigure{plot_eigen_a292_0}\caption{\label{fig:eigen_a292}Eigenvectors corresponding to the crossing
eigenvalue $\lambda\approx0$ at $\sigma=292$.}
\end{figure}

\subsection{Bifurcating solution $\bUsigma+\bVsigma$}

Since a real eigenvalue crossed the real axis near $\sigma\approx292$,
the method described in \subref{methods-bifurcation} furnish another
solution $\bUsigma+\bVsigma$ of \eqref{ns-scale} bifurcating from
$\bUsigma$. The bifurcating solution $\bUsigma+\bVsigma$ is no more
visually symmetric with respect to the plane $z=0$ for $\sigma\gtrsim300$
as shown on \figref{bif_urz,bif_ut,bif_rut}. More precisely, $\bVsigma=\bzero$
for $\sigma\lesssim292$ as expected and $\bVsigma$ is growing as
$\sigma$ increases for $\sigma\gtrsim292$ as shown on \figref{bif_diff}.
The reflected solution $\bUsigma+\mathcal{R}\bV_{\sigma}$ by the
plane $z=0$ is also a solution, so $\sigma\approx292$ is a supercritical
pitchfork-type bifurcation corresponding to the breaking of the
$\bZ_{2}$-symmetry with respect to the plane $z=0$. This behavior shows the
third numerical observation made \vpageref{enu:bifurcation}. By comparing the
streamlines of the base solution $\bUsigma$ (\figref{stream}a) and
of the bifurcating branches $\bUsigma+\bVsigma$ and $\bUsigma+\mathcal{R}\bV_{\sigma}$
(\figref{stream_bif}) at $\sigma=300$, we see that the topological
nature of the streamlines are drastically changed even just after
the bifurcation. The reason is that a slight change in the azimuthal
component has a large influence on the quasi-periodicity of the streamlines
on the tori.

\begin{figure}[p]
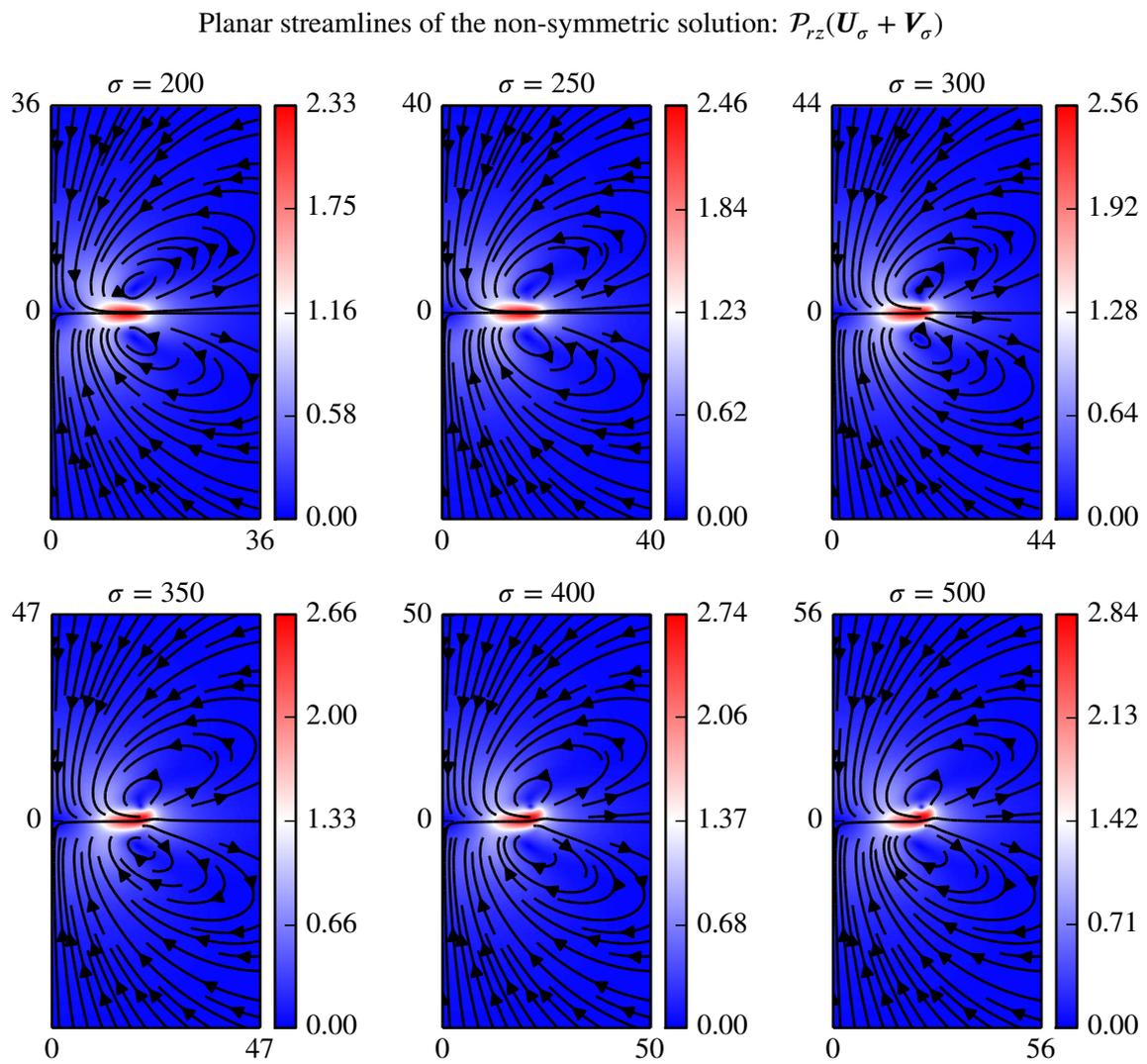

\includefigure{plot_bif_urz_small}\caption{\label{fig:bif_urz}Streamlines of the radial and vertical components
of the numerical solution $\bUsigma+\bVsigma$.}
\end{figure}

\begin{figure}[p]
\includefigure{plot_bif_ut_small}\caption{\label{fig:bif_ut}Azimuthal component of the numerical solution $\bUsigma+\bVsigma$.}
\end{figure}

\begin{figure}[p]
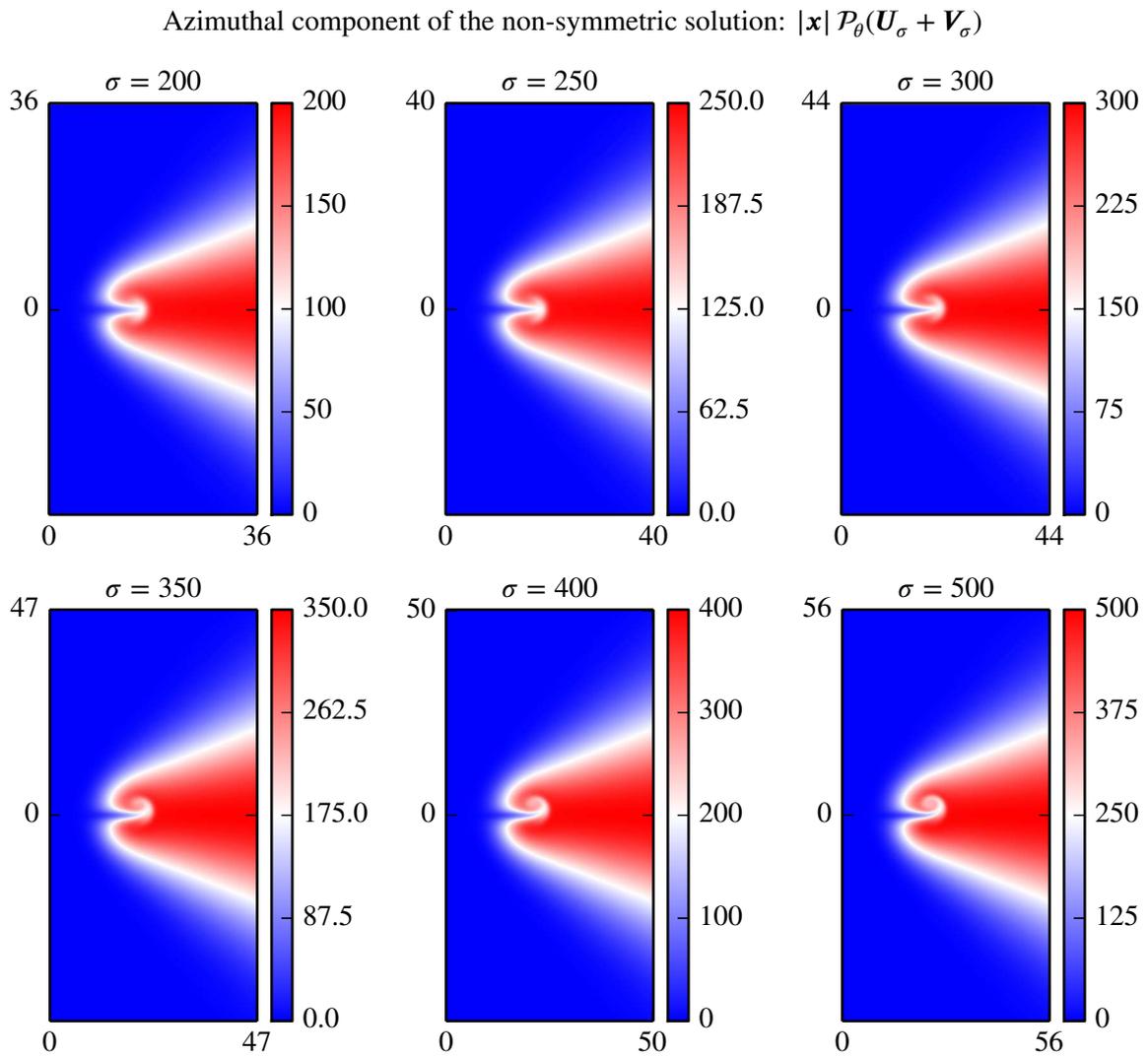

\includefigure{plot_bif_rut_small}\caption{\label{fig:bif_rut}Azimuthal component of the numerical solution
$\bUsigma+\bVsigma$ multiplied by $|\bx|$.
The symmetry $\mathcal{R}$ with respect to the plane $z=0$ is broken
for $\sigma\gtrsim300$.}
\end{figure}

\begin{figure}[p]
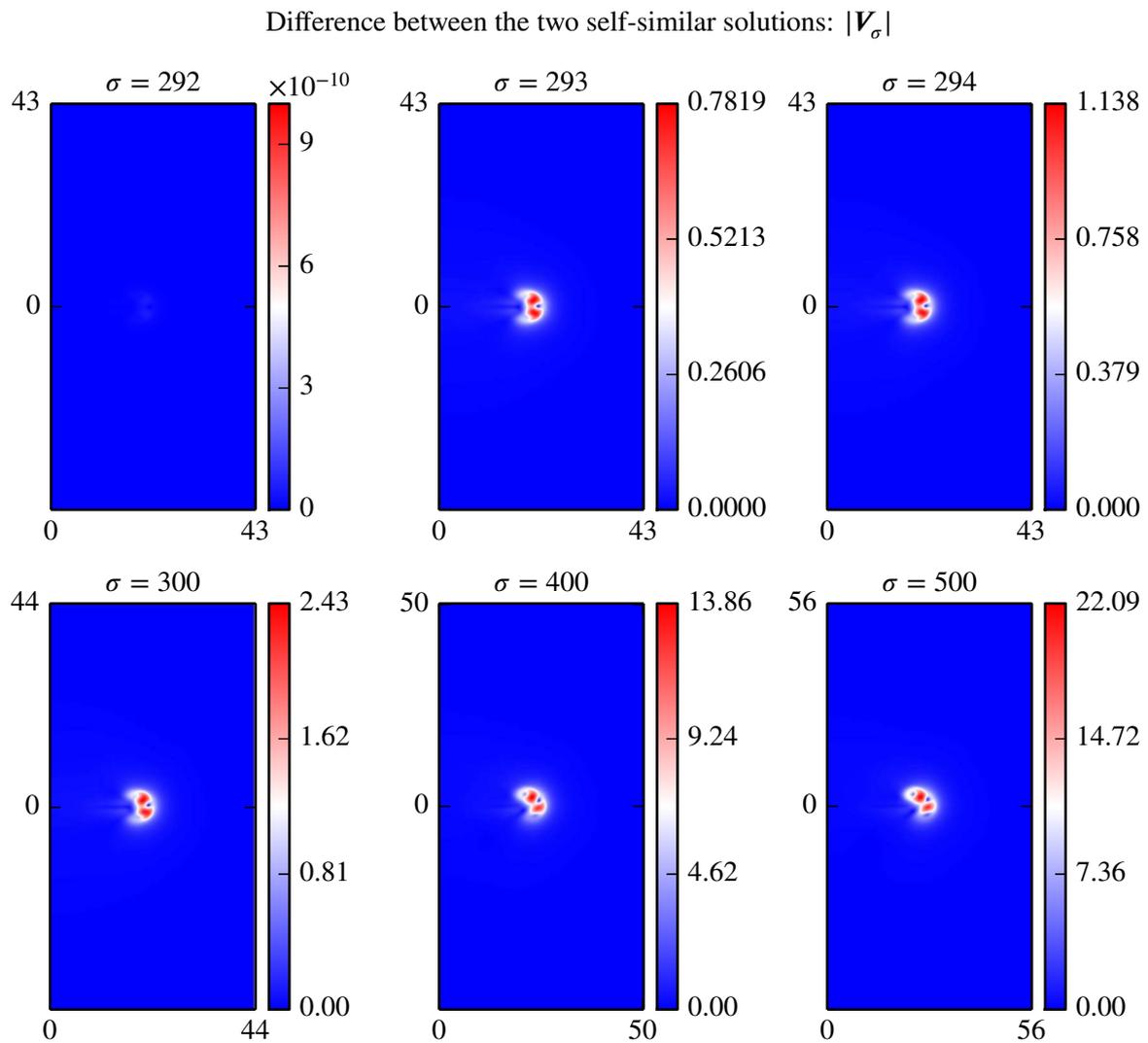

\includefigure{plot_diff_small}\caption{\label{fig:bif_diff}Difference between the symmetric solution $\bUsigma$
and the asymmetric solution $\bUsigma+\bVsigma$.
Before the bifurcation, \emph{i.e.} for $\sigma\lesssim292$, $\bVsigma=\bzero$,
so that both solutions coincide. After the bifurcation, the two solutions
are more and more different as $\sigma$ increases.}
\end{figure}

\begin{figure}[p]
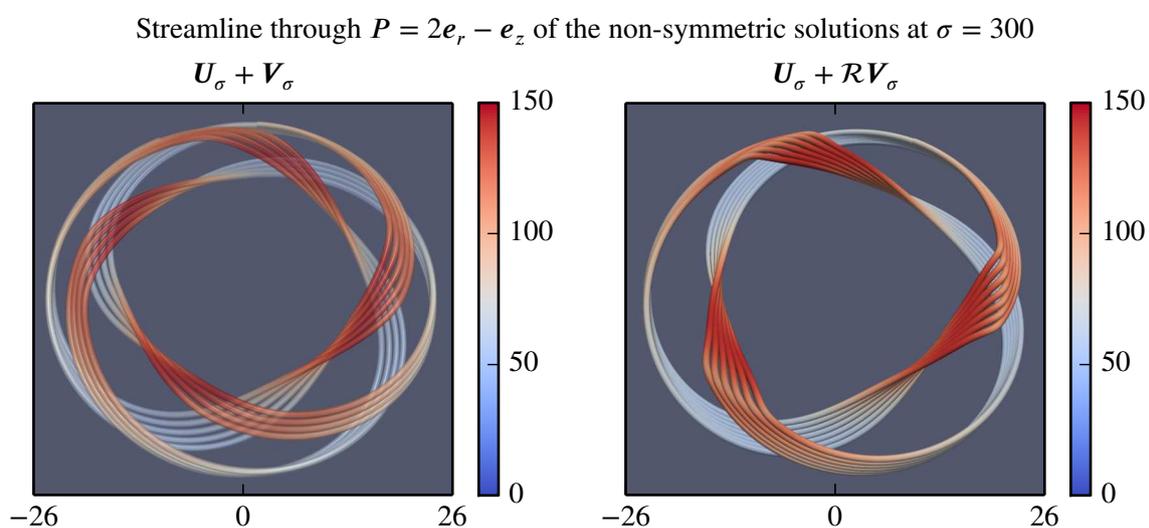

\includefigure{plot_stream_bif_a300}\caption{\label{fig:stream_bif}Streamline of the two non-symmetric solutions
$\bUsigma+\bVsigma$ and $\bUsigma+\mathcal{R}\bVsigma$
at $\sigma=300$ going through the point $r=2$ and $z=-1$. (a) the
streamline is almost $\frac{2\pi}{4}$-periodic; (b) the streamline
is almost $\frac{2\pi}{3}$-periodic.}
\end{figure}

\section{\label{sec:spectrum-LU}Spectrum of $\cL(\bU)$}

First we determine the point spectrum of $\cL(\bzero)$:
\begin{prop}
\label{prop:point-spectrum}The point spectrum of $\cL(\bzero)$
with domain $\mathcal{D}$ is given by a continuous part $\bigl\{\lambda\in\mathbb{C}:\Re\lambda>\frac{3}{4}\bigr\}$
and a discrete part $\bigl\{\frac{3}{2}+n\,,\:n\in\mathbb{N}\bigr\}$.
The eigenvectors of the continuous part decay like $\left|\bx\right|^{-2\lambda}$
at infinity, whereas the discrete part is characterized by eigenvectors
decaying exponentially fast like $\e^{-\left|\bx\right|^{2}/4}$.
The multiplicity of $\frac{3}{2}+n$ is $(n+1)(n+3)$ in $\mathcal{D}$
and $n+1$ in $\mathcal{D}_{\axi}$.\end{prop}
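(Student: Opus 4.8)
The plan is to compute the spectrum of the operator
\[
\cL(\bzero)\bphi=-\Delta\bphi-\tfrac{\bx}{2}\bcdot\bnabla\bphi-\tfrac{1}{2}\bphi+\bnabla p
\]
by exploiting its close relationship with the well-studied scalar Ornstein--Uhlenbeck operator
\[
L_0=-\Delta-\tfrac{\bx}{2}\bcdot\bnabla\,,
\]
which arises in the analysis of the linearized heat semigroup in self-similar variables, following \citet{Gallay.Wayne-Invariantmanifoldsand2002}. First I would recall that $L_0$, acting on scalar functions in a suitable weighted or polynomially-decaying space, has a continuous spectrum $\{\Re\lambda\geq\tfrac{3}{4}\}$ (whose generalized eigenfunctions decay algebraically like $|\bx|^{-2\lambda}$) together with a discrete set of eigenvalues, the eigenfunctions being Hermite-type functions multiplied by the Gaussian $\e^{-|\bx|^2/4}$. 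The shift by $-\tfrac12$ in $\cL(\bzero)$, combined with the first-order transport term $-\tfrac{\bx}{2}\bcdot\bnabla$, accounts for the $\tfrac32$ offset: the spectral value $\tfrac34$ of $L_0$ becomes $\tfrac34$ again after a careful bookkeeping of how vector (rather than scalar) fields transform, while the Hermite eigenvalues $n$ of $L_0$ shift to $\tfrac32+n$ once one accounts for the vectorial structure and the pressure projection.

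The key steps would be as follows. First, since we seek divergence-free eigenvectors, I would work directly with the velocity field and observe that applying $\operatorname{div}$ to the eigenvalue equation kills the pressure gradient and shows the constraint $\bnabla\bcdot\bphi=0$ is preserved; equivalently one can work with the Leray-projected operator $\proj\,\cL(\bzero)$. Second, I would decompose $\cL(\bzero)$ component-wise. The crucial observation is that the transport term $-\tfrac{\bx}{2}\bcdot\bnabla$ does not commute with the vector structure in a naive way, but for the Gaussian-decaying eigenfunctions one can diagonalize by expanding $\bphi$ in a basis of vector-valued Hermite functions $H_\alpha(\bx)\,\e^{-|\bx|^2/4}\,\be_j$. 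On each such basis element the scalar operator $L_0-\tfrac12$ acts with eigenvalue $\tfrac{|\alpha|}{2}+\tfrac12-\tfrac12=\dots$; adjusting the normalization and including the contribution of the $\tfrac12\bphi$ term and the divergence-free projection yields the eigenvalues $\tfrac32+n$. Third, and this is where the multiplicity counting enters, I would count the divergence-free vector Hermite polynomials of the appropriate degree: in $\mathbb{R}^3$ the space of homogeneous vector polynomials of degree $n$ has dimension $3\binom{n+2}{2}$, and imposing the divergence-free condition (which is one scalar constraint of degree $n-1$, i.e.\ $\binom{n+1}{2}$ conditions) together with the modding-out by gradients yields the net count $(n+1)(n+3)$. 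For the axi-symmetric subspace $\mathcal{D}_\axi$ one restricts to fields invariant under rotations about the $z$-axis, which selects the $\theta$-independent modes and reduces the multiplicity to $n+1$.

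The main obstacle, and the step requiring the most care, is the precise multiplicity computation together with rigorously separating the continuous spectrum from the discrete eigenvalues. The continuous part arises because the algebraically-decaying generalized eigenfunctions $|\bx|^{-2\lambda}$ do not lie in $\mathcal{D}$ as genuine $L^2\cap L^4$ eigenvectors; one must verify that for $\Re\lambda>\tfrac34$ these produce approximate eigenfunctions (a Weyl sequence) filling the half-plane, while showing that no genuine $\mathcal{D}$-eigenvector exists there. The cleanest route is probably to pass to the Fourier/Mellin or similarity-variable representation used by \citet{Gallay.Wayne-Invariantmanifoldsand2002}, where $L_0$ becomes an explicit operator whose spectrum is computed via hypergeometric or confluent-hypergeometric special functions (whence the paper's macro $\hypF$), and to carry over their analysis, paying attention to the vectorial and divergence-free refinements. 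I expect the counting of $(n+1)(n+3)$ versus $n+1$ to be the most delicate bookkeeping: one must correctly account for the interplay between the degree-$n$ Hermite structure, the three velocity components, the single scalar divergence constraint, and the quotient by pressure gradients, and then intersect all of this with the axi-symmetry projection.
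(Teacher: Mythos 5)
Your plan has one genuine conceptual gap and it concerns the half-plane $\bigl\{\Re\lambda>\frac{3}{4}\bigr\}$: you have its nature exactly backwards. \propref{point-spectrum} is a statement about the \emph{point} spectrum, and the ``continuous part'' refers to a continuum of genuine eigenvalues: for every $\lambda$ with $\Re\lambda>\frac{3}{4}$ there is an honest eigenvector in $\mathcal{D}$, smooth at the origin and decaying like $\left|\bx\right|^{-2\lambda}$, and the threshold $\frac{3}{4}$ is precisely the condition for a field with that decay to lie in $L^{2}\cap L^{4}$. Your plan --- to show that no genuine $\mathcal{D}$-eigenvector exists there and to produce instead a Weyl sequence filling the half-plane --- would establish the opposite of what is claimed (Weyl sequences are the relevant tool for the essential-spectrum inclusion \eqref{spectrum-LU} in \thmref{spectrum-L}, not here). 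Relatedly, your primary ansatz, expansion in vector-valued Hermite functions, structurally cannot see this part of the spectrum, since Hermite functions only generate the Gaussian-decaying family. The paper's proof captures both families at once: the pressure is harmonic, so one may take $p=0$; the poloidal--toroidal decomposition $\bv=\mathbf{T}(\psi)+\mathbf{S}(\phi)$ reduces the vector problem to two scalar Ornstein--Uhlenbeck eigenvalue problems; separation of variables in spherical harmonics gives a second-order radial ODE solved explicitly by $f_{\lambda l}(r)=r^{l}\,\hypF\bigl(\lambda+\frac{l}{2};l+\frac{3}{2};-\frac{r^{2}}{4}\bigr)$; and the Kummer asymptotics show this decays like $r^{-2\lambda}$ generically (an eigenvector for \emph{every} $\lambda$ in the open half-plane) and like $\e^{-r^{2}/4}$ exactly when $\lambda-\frac{3+l}{2}\in\mathbb{N}$ (the discrete family). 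Because the ODE analysis classifies all solutions regular at the origin, it also shows there are no further eigenvalues --- a completeness statement that a purely constructive Hermite argument would still owe.

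On the discrete part, your route could be made to work, but three points need repair. First, the eigenvalue bookkeeping is literally left unfinished (``$\frac{|\alpha|}{2}+\frac{1}{2}-\frac{1}{2}=\dots$''); carrying it out, the componentwise operator has eigenvalue $1+\frac{k}{2}$ on Hermite fields of degree $k$, the divergence-free constraint kills $k=0$, and one finds the spacing $\frac{3}{2}+\frac{n}{2}$ --- which is what the paper's own proof and its numerics at $\sigma=0$ report, in tension with the ``$\frac{3}{2}+n$'' of the statement you were asked to prove; your proposal inherits that discrepancy without noticing it. Second, your multiplicity count lands on the right number but via an incorrect identification: an eigenvector has the form $\boldsymbol{P}(\bx)\,\e^{-\left|\bx\right|^{2}/4}$, and divergence-freeness of the \emph{field} reads $\bnabla\bcdot\boldsymbol{P}=\frac{1}{2}\bx\bcdot\boldsymbol{P}$, not $\bnabla\bcdot\boldsymbol{P}=0$; the correct count is of divergence-free combinations of $\partial^{\alpha}\bigl(\e^{-\left|\bx\right|^{2}/4}\bigr)\be_{j}$ with $|\alpha|=n+1$, namely $3\binom{n+3}{2}-\binom{n+4}{2}$, which happens to equal your $3\binom{n+2}{2}-\binom{n+1}{2}=(n+1)(n+3)$; and no ``modding out by gradients'' enters anywhere. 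Third, the axi-symmetric multiplicity $n+1$ is asserted rather than derived; in Cartesian Hermite variables this count is genuinely awkward, whereas in the paper's spherical framework it is immediate: keep only the $m=0$ harmonics, one mode per admissible $l$, and the number of admissible values of $l$ at the $n$-th eigenvalue is $n+1$. Your closing suggestion --- passing to the confluent-hypergeometric representation --- is in fact not an optional ``cleanest route'' but the route the proof requires, because it is the only one among those you mention that produces the algebraically decaying eigenvectors.
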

\begin{proof}
The point spectrum of $\cL(\bzero)$ is characterized by
\begin{align}
-\Delta\bv-\frac{\bx}{2}\bcdot\bnabla\bv-\frac{1}{2}\bv+\bnabla p & =\lambda\bv\,, & \bnabla\bcdot\bv & =0\,,\label{eq:eigen-problem-sigma0}
\end{align}
so by taking the divergence of the equation, we get $\Delta p=0$,
and we can choose $p=0$. Since $\bv$ is divergence-free, we use
the poloidal-toroidal decomposition,
\[
\bv=\mathbf{T}(\psi)+\mathbf{S}(\phi)\,,
\]
where $\psi$ and $\phi$ are two scalar fields and
\begin{align*}
\mathbf{T}(\psi) & =\bnabla\bwedge(\psi\bx)\,, & \mathbf{S}(\phi) & =\bnabla\bwedge\mathbf{T}(\phi)\,.
\end{align*}
Since
\begin{align*}
\Delta\mathbf{T}(\psi) & =\mathbf{T}(\Delta\psi)\,, & \Delta\mathbf{S}(\phi) & =\mathbf{S}(\Delta\phi)\,,
\end{align*}
and
\begin{align*}
\bx\bcdot\bnabla\mathbf{T}(\psi) & =\mathbf{T}(\bx\bcdot\bnabla\psi)\,, & \bx\bcdot\bnabla\mathbf{S}(\psi) & =\mathbf{S}(\bx\bcdot\bnabla\phi)-\mathbf{S}(\phi)\,,
\end{align*}
we obtain that \eqref{eigen-problem-sigma0} is transformed into
\begin{align*}
-\Delta\psi-\frac{\bx}{2}\bcdot\bnabla\psi & =\left(\lambda+\frac{1}{2}\right)\psi\,, & -\Delta\phi-\frac{\bx}{2}\bcdot\bnabla\phi & =\lambda\phi\,.
\end{align*}
Both equations being similar, we focus on the second one. Due to the
spherical symmetric, the separation of variables can be used in spherical
coordinates $(r,\theta,\varphi)$ and the eigenvectors are given by
\begin{align*}
\phi_{\lambda lm}(r,\theta,\varphi) & =f_{\lambda l}(r)Y_{lm}(\theta,\varphi)\,,
\end{align*}
where $Y_{lm}$ are the spherical harmonics and $f_{\lambda l}$ satisfies
the following radial equation
\begin{equation}
f_{\lambda l}^{\prime\prime}+\left(\frac{2}{r}+\frac{r}{2}\right)f_{\lambda l}^{\prime}-\frac{l(l+1)}{r^{2}}f_{\lambda l}+\lambda f_{\lambda l}=0\,.\label{eq:ode}
\end{equation}
In the above, $l\in\mathbb{N}$ and $m\in\{-l,-l+1,\dots,l-1,l\}$.
Explicitly, we have
\begin{align*}
\mathbf{T}(\phi_{\lambda lm}) & =f_{\lambda l}(r)\left(\csc\theta\,\partial_{\varphi}Y_{lm}(\theta,\varphi)\be_{\theta}-\partial_{\theta}Y_{lm}(\theta,\varphi)\be_{\varphi}\right)\,,\\
\mathbf{S}(\phi_{\lambda lm}) & =l(l+1)\frac{f_{\lambda l}(r)}{r}Y_{lm}(\theta,\varphi)\be_{r}+\frac{\left(rf_{\lambda l}(r)\right)^{\prime}}{r}\left(\partial_{\theta}Y_{lm}(\theta,\varphi)\be_{\theta}+\csc\theta\,\partial_{\varphi}Y_{lm}(\theta,\varphi)\be_{\varphi}\right),
\end{align*}
and one can check that the unique solution of \eqref{ode} leading
to continuous and nontrivial fields requires $l\geq1$ and is given
by
\[
f_{\lambda l}(r)=r^{l}\,\hypF\left(\lambda+\frac{l}{2};l+\frac{3}{2};\frac{-r^{2}}{4}\right)\,,
\]
where $\hypF$ is the Kummer's confluent hypergeometric function.
At large values of $r$, we have
\[
f_{\lambda l}(r)=\left(1+O(r^{-2})\right)\begin{cases}
\e^{-r^{2}/4}r^{2\lambda-3}\,, & \text{if}\;\lambda-\frac{3+l}{2}\in\mathbb{N}\,,\\
r^{-2\lambda}\,, & \text{otherwise}\,.
\end{cases}
\]
Therefore, the spectrum of \eqref{eigen-problem-sigma0} in $\mathcal{V}$
has a continuous part $\Re\lambda>\frac{3}{4}$ and a discrete part
given by $\lambda_{n}=\frac{3}{2}+\frac{n}{2}$ for $n\in\mathbb{N}$,
characterized by eigenvectors decaying exponentially fast at infinity.
The eigenspace corresponding to $\lambda_{n}$ for $n\in2\mathbb{N}$
is span by $\mathbf{T}(\phi_{(\lambda+1/2)lm})$ with $l\in\{1,3,5,\dots,n+1\}$
and by $\mathbf{S}(\phi_{\lambda lm})$ with $l\in\{2,4,6,\dots,n\}$,
where in both $m\in\{-l,-l-1,\dots,l-1,l\}$. For $n\in2\mathbb{N}+1$,
the eigenspace corresponding to $\lambda_{n}$ is span by $\mathbf{T}(\phi_{(\lambda+1/2)lm})$
with $l\in\{2,4,6,\dots,n+1\}$ and by $\mathbf{S}(\phi_{\lambda lm})$
with $l\in\{1,3,5,\dots,n\}$, always with $m\in\{-l,-l-1,\dots,l-1,l\}$.
Hence the multiplicity of $\lambda_{n}$ is $(n+1)(n+3)$. In $\mathcal{V}_{\axi}$
the eigenvectors are characterized by $m=0$, so the multiplicity
of $\lambda_{n}$ is $n+1$.
\end{proof}
Using \propref{point-spectrum}, the proof of \thmref{spectrum-L}
follows by applying results by \citet{Gallay.Wayne-Invariantmanifoldsand2002}
and \citet{Jia-Areincompressible3d2015}:
\begin{proof}[Proof of \thmref{spectrum-L}.]
The spectrum of the operator $L\bv=-\Delta\bv-\frac{\bx}{2}\bcdot\bnabla\bv-\frac{1}{2}\bv$
on domain $L^{2}(\mathbb{R}^{3})$ without divergence-free condition,
was determined explicitly by \citet[Theorem A.1]{Gallay.Wayne-Invariantmanifoldsand2002},
\[
\sigma(L)=\bigl\{\lambda\in\mathbb{C}:\Re\lambda\geq\tfrac{3}{4}\bigr\}\cup\bigl\{\tfrac{3}{2}+n\,,\;n\in\mathbb{N}\bigr\}\,.
\]
Therefore we directly obtain that $\sigma(\cL(\bzero))\subset\sigma(L)$.
The fact that the spectrum of $\sigma(\cL(\bzero))$ coincide
with the spectrum of $\sigma(L)$ follows from \propref{point-spectrum}.

Since the operator $\cL(\bU)-\cL(\bzero)$ is a relatively
compact perturbation of $\cL(\bzero)$, the essential spectrum
is unchanged, and \eqref{spectrum-LU} follows, see \citet[Lemma 2.7]{Jia-Areincompressible3d2015}.
\end{proof}

\section{\label{sec:bifurcation}Continuation and bifurcation}

In this section, we sketch the proof of \thmref{continuation-bifurcation},
since it follows by applying standard results from the theory of bifurcations:
\begin{proof}[Proof of \thmref{continuation-bifurcation}.]
First of all, since $\mathcal{D}\subset W^{2,4}(\mathbb{R}^{3})\subset C^{1}(\mathbb{R}^{3})$
with continuous embeddings, we directly deduce the continuity of the
map $F\colon\mathcal{D}\times\mathbb{R}\to\mathcal{V}$ defined by
\eqref{def-F}. Therefore, $F$ is smooth since it is quadratic.

For the first part, since $0\notin\sigma\bigl(\mathcal{L}(\bUzero)\bigr)$,
then $\mathcal{L}(\bUzero)=D_{1}F(\bv_{0},\sigma_{0})$ is invertible, so the result follows
by applying the implicit function theorem \citep[\S I.1]{Kielhoefer-bifurcation2012}.

For the second part, we define $\hat{F}(\bw,\sigma)=F(\bv_{1}(\sigma)+\bw,\sigma)$,
so that $\hat{F}(\bzero,\sigma)=\bzero$. The aim is to find a nontrivial
solution of $\hat{F}(\bw,\sigma)=\bzero$. Since $\bv_{1}:(\sigma_{0}-\varepsilon,\sigma_{0}+\varepsilon)\to\mathcal{D}$
is smooth, we deduce the smoothness of $\hat{F}:\mathcal{D}\times\mathbb{R}\to\mathcal{V}$.
We have $D_{1}\hat{F}(\bzero,\sigma_{0})=D_{1}F(\bv_{0},\sigma_{0})=\mathcal{L}(\bUzero)$,
so
\begin{align*}
\Kernel\bigl(D_{1}\hat{F}(\bzero,\sigma_{0})\bigr) & =\operatorname{span}(\bphi)\,, & \Range\bigl(D_{1}\hat{F}(\bzero,\sigma_{0})\bigr) & =\Range\bigl(\mathcal{L}(\bUzero)\bigr)\,.
\end{align*}
Since $\mathcal{L}(\bUzero)-\mathcal{L}(\bzero)$ is a relatively
compact perturbation of $\mathcal{L}(\bzero)$, $\mathcal{L}(\bUzero)$
is a Fredholm operator of index zero, hence $\hat{F}(\bcdot,0)$ is
a Fredholm operator of index zero. Moreover,
\begin{align*}
D_{12}\hat{F}(\bzero,\sigma_{0})\bphi & =D_{12}F(\bv_{0},\sigma_{0})\bphi+D_{11}F(\bv_{0},\sigma_{0})(\bphi,\bv_{1}^{\prime}(\sigma_{0}))\\
 & =\left(\bA_{0}+\bv_{1}^{\prime}(\sigma_{0})\right)\bcdot\bnabla\bphi+\bphi\bcdot\bnabla\left(\bA_{0}+\bv_{1}^{\prime}(\sigma_{0})\right)+\bnabla p\,,\\
 & =\bpsi\bcdot\bnabla\bphi+\bphi\bcdot\bnabla\bpsi+\bnabla p\,,
\end{align*}
where $\bpsi=\bA_{0}+\bv_{1}^{\prime}(\sigma_{0})=\partial_{\sigma}\bUsigma\bigl|_{\sigma=\sigma_{0}}$
so by hypothesis $D_{12}\hat{F}(\bzero,\sigma_{0})\bphi\notin\Range\bigl(D_{1}\hat{F}(\bzero,\sigma_{0})\bigr)$.
Therefore, we can apply the Crandall–Rabinowitz theorem
stated in \citet[Theorem I.5.1]{Kielhoefer-bifurcation2012} to obtain
a nontrivial smooth curve $\bigl\{\bigl(\bw(s),\sigma_{2}(s)\bigr)\in\mathcal{D}\times\mathbb{R}\,,\;s\in(-\varepsilon,\varepsilon)\bigr\}$
through $(\bv_{0},\sigma_{0})$ such that $\hat{F}(\bw(s),\sigma_{2}(s))=\bzero$,
$\bw(0)=\bzero$ and $\sigma_{2}(0)=\sigma_{0}$. Then by defining
$\bv_{2}(s)=\bv_{1}(\sigma_{2}(s))+\bw(s)$, we obtain that $\bigl\{\bigl(\bv_{2}(s),\sigma_{2}(s)\bigr)\in\mathcal{D}\times\mathbb{R}\,,\;s\in(-\varepsilon,\varepsilon)\bigr\}$
is a smooth solution curve through $(\bv_{0},\sigma_{0})$ such that
$F(\bv_{2}(s),\sigma_{2}(s))=\bzero$, $\bv_{2}(0)=\bv_{0}$ and $\sigma_{2}(0)=\sigma_{0}$.
Since $D_{11}\hat{F}(\bzero,\sigma_{0})=D_{11}F(\bv_{0},\sigma_{0})$,
we have
\[
D_{11}\hat{F}(\bzero,\sigma_{0})(\bphi,\bphi)=2\bphi\bcdot\bnabla\bphi+\bnabla p\,,
\]
and the nature of the bifurcation follows from the discussion in \citet[\S I.6]{Kielhoefer-bifurcation2012}. We note that the usual non-degeneracy condition for the pitchfork bifurcation may not be satisfied as $\hat F$ is quadratic. On the other hand, the reflection symmetry forces the bifurcation to be of the pitchfork type.
\end{proof}

\section{\label{sec:localization}Localization of self-similar solutions}

In this section, we follow the ideas of \citet{Jia-Areincompressible3d2015}
to obtain solutions with finite energy by truncation of scale-invariant
solutions. The space $X_{T}$ is defined as
\begin{equation}
X_{T}=\bigl\{\bw\in L^{\infty}(0,T;L^{4}(\mathbb{R}^{3}))\,:\:\sup_{t\in(0,T)}t^{1/2}\left\Vert \bnabla\bw(t,\cdot)\right\Vert _{L^{4}(\mathbb{R}^{3})}<\infty\bigr\}\,,\label{eq:def-X_T}
\end{equation}
equipped with the norm
\[
\left\Vert \bw\right\Vert _{X_{T}}=\sup_{t\in(0,T)}\left(\left\Vert \bw(t,\cdot)\right\Vert _{L^{4}(\mathbb{R}^{3})}+t^{1/2}\left\Vert \bnabla\bw(t,\cdot)\right\Vert _{L^{4}(\mathbb{R}^{3})}\right)\,.
\]
The space $X_{T,\axi}$ is the subspace of axi-symmetric vector fields
in $X_{T}$.

By restricting all the spaces to axi-symmetric vector fields, the result
of \citet[Theorem 1.2]{Jia-Areincompressible3d2015} becomes:
\begin{thm}
\label{thm:ns-singular}Let $\bU\in\mathcal{U}_{\axi}$ be such
that the spectrum of $\mathcal{L}(\bU)$ with domain $\mathcal{D}_{\axi}$
is included in $\bigl\{z\in\mathbb{C}\,:\:\Re z>-\beta\bigr\}$ some $\beta<\frac{1}{8}$.
Let $\bV\in\mathcal{V}_{\axi}$ be such that $\left\Vert\bV\right\Vert _{\mathcal{V}}+\left\Vert\bnabla\bV\right\Vert_{\mathcal{V}}$
is sufficiently small depending on $\left\Vert\bU\right\Vert_{\mathcal{U}}$
and $\beta$. Let
\[
\bu(t,\bx)=\frac{1}{t^{1/2}}(\bU+\bV)\biggl(\frac{\bx}{t^{1/2}}\biggr)\,.
\]
Let $\bw_{0}\in L_{\axi}^{4}(\mathbb{R}^{3})$ be a divergence-free
vector field. Then there exists a time $T>0$ and a unique solution
$\bw\in X_{T,\axi}$ to the generalized Navier–Stokes
system with singular lower order terms,
\begin{align}
\partial_{t}\bw+\bu\bcdot\bnabla\bw+\bw\bcdot\bnabla\bu+\bw\bcdot\bnabla\bw & =\Delta\bw-\bnabla p\,, & \bnabla\bcdot\bw & =0\,, & \bw(0,\cdot) & =\bw_{0}\,.\label{eq:ns-singular}
\end{align}
Here the initial condition is satisfied in the sense that
\[
\lim_{t\to0^{+}}\left\Vert \bw(t,\cdot)-\bw_{0}\right\Vert _{L^{4}(\mathbb{R}^{3})}=0\,.
\]

\end{thm}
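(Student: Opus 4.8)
The plan is to solve \eqref{ns-singular} through its mild (Duhamel) formulation and a contraction-mapping argument in $X_{T,\axi}$, adapting the scheme of \citet[Theorem 1.2]{Jia-Areincompressible3d2015} to the axi-symmetric class. The first point to record is that every operator entering \eqref{ns-singular}---the Laplacian, the Leray projection $\proj$, and the convection and stretching by the background field $\bu$---commutes with rotations about the $z$-axis, so the axi-symmetric subspace is invariant and the whole construction can be carried out inside $L^4_{\axi}$ and $X_{T,\axi}$ without extra work.

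The key device is the linear propagator $S(t,s)$ solving the non-autonomous linear problem $\partial_t\bphi=\Delta\bphi-\proj(\bu\bcdot\bnabla\bphi+\bphi\bcdot\bnabla\bu)$ with $\bphi(s,\cdot)$ prescribed, thereby absorbing the two singular lower-order terms exactly rather than perturbatively. Because $\bu$ is self-similar, the substitution $\bxi=\bx\,t^{-1/2}$, $\tau=\log t$, $\bphi(t,\bx)=t^{-1/2}\bvphi(\tau,\bxi)$ turns this linear flow into the autonomous equation $\partial_\tau\bvphi=-\cL(\bU+\bV)\bvphi$, so that $S(t,s)$ is conjugate, up to the explicit rescaling factors, to the semigroup $\e^{-(\tau_t-\tau_s)\cL(\bU+\bV)}$ generated by the linearization studied in \thmref{spectrum-L}.

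The technical heart is then the pair of uniform bounds
\[
\|S(t,s)\bff\|_{L^4}\lesssim\|\bff\|_{L^4}\,,\qquad\|\bnabla S(t,s)\bff\|_{L^4}\lesssim(t-s)^{-1/2}\|\bff\|_{L^4}\,,\qquad 0<s<t<T\,.
\]
Under the self-similar rescaling the $L^4$ norm picks up a factor $t^{1/8}$, so on $L^4$ the growth of $\e^{-\tau\cL(\bU+\bV)}$ competes with the exponent $\tfrac18$: boundedness of $S(t,s)$ is equivalent to the spectral growth rate staying below $\tfrac18$. This is exactly what the hypothesis $\beta<\tfrac18$ buys, once the smallness of $\bV$ is used to guarantee that $\cL(\bU+\bV)$---a relatively compact perturbation of $\cL(\bU)$, whose essential spectrum is pinned down in \thmref{spectrum-L}---still has its spectrum in a half-plane $\{\Re z>-\beta'\}$ with $\beta'<\tfrac18$. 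The gradient estimate is the parabolic smoothing of this analytic semigroup, transported through the same rescaling.

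With the linear theory in hand, I would solve
\[
\bw(t)=S(t,0)\bw_0-\int_0^t S(t,s)\,\proj\,\bnabla\bcdot\bigl(\bw\otimes\bw\bigr)(s)\,\rd s
\]
by Banach's fixed point theorem in a small ball of $X_{T,\axi}$: the bilinear term is controlled by the smoothing estimates together with $\|\bw\otimes\bw\|_{L^2}\lesssim\|\bw\|_{L^4}^2$ and Hölder in time, and choosing $T$ small (depending on $\|\bU\|_{\mathcal{U}}$, $\beta$, and $\|\bw_0\|_{L^4}$) makes the map a contraction, yielding the unique solution. The initial condition $\|\bw(t)-\bw_0\|_{L^4}\to0$ follows from strong continuity $S(t,0)\bw_0\to\bw_0$ in $L^4$ as $t\to0^+$. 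The step I expect to be the genuine obstacle is the construction of $S(t,s)$ together with the two estimates above: the coefficient $\bu\sim|\bx|^{-1}$ (equivalently $t^{-1/2}$) is scaling-critical and cannot be treated as a perturbation of the heat semigroup, so one is forced to exploit the self-similar conjugacy and the spectral gap $\beta<\tfrac18$; everything downstream---the bilinear estimate, the contraction, the recovery of the datum, and the invariance of the axi-symmetric class---is then routine.
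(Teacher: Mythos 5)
Your proposal should first be measured against what the paper actually does: the paper does not prove this theorem at all, but imports it verbatim from \citet[Theorem 1.2]{Jia-Areincompressible3d2015}, the only added content being precisely your opening observation that every operator involved commutes with rotations about the $z$-axis, so that the statement restricts to $\mathcal{D}_{\axi}$, $\mathcal{V}_{\axi}$ and $X_{T,\axi}$. Your architecture (mild formulation, propagator $S(t,s)$, self-similar conjugacy to $\e^{-\tau\cL}$, contraction in $X_{T,\axi}$) does match the Jia–Šverák proof, and the scaling numerology behind the exponent $\tfrac18$ is exactly right.

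The genuine gap sits at the step you yourself single out as the heart, and it is not merely technical. Writing $D_{\lambda}f=\lambda f(\lambda\,\cdot)$, so that $\|D_{\lambda}\|_{L^{4}\to L^{4}}=\lambda^{1/4}$ and $S(t,s)=D_{t^{-1/2}}\,\e^{-\tau'\cL(\bU+\bV)}D_{s^{1/2}}$ with $\tau'=\log(t/s)$, one has the exact identity $\|S(t,s)\|_{L^{4}\to L^{4}}=\e^{-\tau'/8}\,\|\e^{-\tau'\cL(\bU+\bV)}\|_{L^{4}\to L^{4}}$. Hence uniform boundedness of the propagator is equivalent not to a growth rate \emph{below} $\tfrac18$, but to the endpoint bound $\|\e^{-\tau\cL(\bU+\bV)}\|_{L^{4}\to L^{4}}\leq C\e^{\tau/8}$, i.e.\ the exact critical rate with a uniform constant. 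A rate below $\tfrac18$ is impossible on $L^{4}$: the free semigroup has the explicit representation $\e^{-\tau\cL(\bzero)}=D_{\e^{\tau/2}}\e^{(\e^{\tau}-1)\Delta}$, so $\|\e^{-\tau\cL(\bzero)}\|_{L^{4}\to L^{4}}=\e^{\tau/8}$ exactly, the $L^{4}(\rd x)$ spectrum of this (shifted Ornstein–Uhlenbeck) operator is a full half-plane $\bigl\{\Re z\geq-\tfrac18\bigr\}$, and this part of the spectrum survives the relatively compact perturbation by the profile. In particular no spectral gap below $\tfrac18$ ever holds on $L^{4}$, the generator is not sectorial there (so the ``analytic semigroup'' smoothing you invoke for the gradient bound is also unavailable on $L^{4}$), and spectral location alone can never produce an endpoint estimate with a uniform constant. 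Nor can you run the argument on the space where the hypothesis actually lives, $\mathcal{V}_{\axi}=L^{2}\cap L^{4}$ with domain $\mathcal{D}_{\axi}$: the rescaled data $D_{s^{1/2}}\bw_{0}$ blow up in $L^{2}$ like $s^{-1/4}$ as $s\to0^{+}$, and $\bw_{0}$ is not assumed to be in $L^{2}$ at all, so the $\mathcal{V}$-semigroup bound furnished by the hypothesis cannot be transported to $S(t,0)$. What the cited proof actually does is a two-tier argument absent from your sketch: the scaling-critical (far-field, continuous-spectrum) component of the evolution is controlled by direct comparison with the heat flow, using the explicit formula above, Duhamel, and the decay of $\bU$, which yields the sharp $\e^{\tau/8}$ rate; the spectral hypothesis $\beta<\tfrac18$ enters only to control the finitely many discrete modes lying below the essential spectrum on $\mathcal{V}_{\axi}$. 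Once those linear estimates are in place, your remaining steps (bilinear estimate, contraction, recovery of the datum, invariance of the axi-symmetric class) are indeed routine.
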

Be using this theorem, we follow the arguments of \citet[\S 5]{Jia-Areincompressible3d2015}
to localize the two self-similar solutions to $L^{2}(\mathbb{R}^{3})$:
\begin{proof}[Proof of \thmref{localization}.]
By assuming that our numerical results reflect the actual behavior of the solutions,
we obtain the existence of two different axi-symmetric self-similar solutions
$\bUsigma$ and $\bUsigma+\bVsigma$ for $\sigma>\sigma_{0}$ satisfying
\eqref{ns-scale} with the same initial datum $\bu_{0}=\sigma\ba_{0}$,
\begin{align*}
\bu_{1}(t,\bx) & =\frac{1}{t^{1/2}}\bUsigma\biggl(\frac{\bx}{t^{1/2}}\biggr)\,, & \bu_{2}(t,\bx) & =\frac{1}{t^{1/2}}\left(\bUsigma+\bVsigma\right)\biggl(\frac{\bx}{t^{1/2}}\biggr)\,.
\end{align*}
By choosing $\sigma>\sigma_{0}$ close enough to $\sigma_{0}$, we can assume that the crossing eigenvalue $\lambda_{\sigma}$ in \eqref{numerics-spectrum} satisfies $\lambda_{\sigma}>-\frac{1}{8}$ and moreover, we can make
$\left\Vert\bVsigma\right\Vert_{\mathcal{V}}+\left\Vert\bnabla\bVsigma\right\Vert_{\mathcal{V}}$
small enough to apply \thmref{ns-singular}. By a cutoff of the stream
function associated to $\bu_{0}$, we can write
$\bu_{0}=\tilde{\bu}_{0}-\bw_{0}$,
where $\tilde{\bu}_{0}$ is a divergence-free vector field of compact
support in $B_{2R}$ and equal to $\bu_{0}$ on $B_{R}$ and $\bw_{0}$
is a divergence-free vector field such that $\left\Vert \bw_{0}\right\Vert _{L^{4}(\mathbb{R}^{3})}\leq CR^{-1/4}$.
By taking $R$ large enough, we can apply \thmref{ns-singular} with
initial data $\bw_{0}$, $\bU=\bUsigma$, and $\bV=\bzero$, to obtain
a solution $\bw_{1}\in X_{T,\axi}$ of \eqref{ns-singular}. Therefore
$\tilde{\bu}_{1}=\bu_{1}+\bw_{1}$ is an axi-symmetric solution of
the Navier–Stokes system \eqref{ns-cauchy} with initial
data $\tilde{\bu}_{0}$. In the same way, by applying \thmref{ns-singular}
with initial data $\bw_{0}$, $\bU=\bUsigma$, and $\bV=\bVsigma$,
we obtain a solution $\bw_{2}\in X_{T,\axi}$ of \eqref{ns-singular},
so that $\tilde{\bu}_{2}=\bu_{2}+\bw_{2}$ is an axi-symmetric solution
of the Navier–Stokes system \eqref{ns-cauchy} with initial
data $\tilde{\bu}_{0}$. By the standard regularity theory of the
Navier–Stokes equations $\bw_{1},\bw_{2}\in C^{\infty}((0,T)\times\mathbb{R}^{3})$,
so by using \thmref{jia-sverak}, we obtain that $\tilde{\bu}_{1},\tilde{\bu}_{2}\in C^{\infty}((0,T)\times\mathbb{R}^{3})$.
Since $\tilde{\bu}_{0}\in L^{2}(\mathbb{R}^{3})$, one can show that
$\tilde{\bu}_{1}$ and $\tilde{\bu}_{2}$ are Leray–Hopf
solutions, for example by using the results of \citet[Lemma 2.2]{Jia-Minimal-L3-Initial2013}.

We now prove that $\tilde{\bu}_{1}$ and $\tilde{\bu}_{2}$ are not
equal. Since $\bw_{1}$ and $\bw_{2}$ are uniformly bounded in $L^{4}(\mathbb{R}^{3})$,
we see that
\begin{eqnarray*}
\left\Vert \tilde{\bu}_{1}(t,\cdot)-\tilde{\bu}_{2}(t,\cdot)\right\Vert _{L^{4}(\mathbb{R}^{3})} & \geq & \left\Vert \bu_{1}(t,\cdot)-\bu_{2}(t,\cdot)\right\Vert _{L^{4}(\mathbb{R}^{3})}-\left\Vert \bw_{1}(t,\cdot)-\bw_{2}(t,\cdot)\right\Vert _{L^{4}(\mathbb{R}^{3})}\\
 & \geq & t^{-1/8}\left\Vert \bVsigma\right\Vert _{L^{4}(\mathbb{R}^{3})}-C\,,
\end{eqnarray*}
is unbounded as $t\to0^{+}$, and therefore $\tilde{\bu}_{1}$ and
$\tilde{\bu}_{2}$ are not equal since $\bVsigma$ is not trivial.

We now prove that $\tilde{\bu}_{1}$ and $\tilde{\bu}_{2}$ belong
to the complement of Serrin class. Since $\bw_{1}\in X_{T}$, we obtain
that $\sup_{t\in(0,T)}t^{1/2}\left\Vert \bw_{1}\right\Vert _{L^{\infty}(\mathbb{R}^{3})}<\infty$,
so $\bw_{1}\in L^{p}(0,T;L^{\infty}(\mathbb{R}^{3}))$ for $1\leq p<2$.
By interpolation, we obtain $\bw_{1}\in L^{p}(0,T;L^{q}(\mathbb{R}^{3}))$
for
\begin{equation}
p=\infty\text{ and }q=4\qquad\text{or}\qquad\frac{2}{p}+\frac{4}{q}>1\text{ and }q\geq4\label{eq:reqion-w}
\end{equation}
as drawn on \figref{region}. We split the space into $\mathbb{R}^{3}=B\cup B^{c}$
where $B$ is the ball of radius one centered at the origin and $B^{c}$
its complement. By using the explicit decay \eqref{bound-U} of the
self-similar solutions, we obtain that $\bu_{1}\in L^{p}(0,T;L^{q}(B))$
for $\frac{2}{p}+\frac{3}{q}>1$ and therefore $\tilde{\bu}_{1}\in L^{p}(0,T;L^{q}(B))$
also for $\frac{2}{p}+\frac{3}{q}>1$. In the same way, we can prove
that $\bu_{1}\in L^{\infty}(0,T;L(B^{c}))$ for $q>3$, so $\tilde{\bu}_{1}\in L^{p}(0,T;L^{q}(B^{c}))$
for $p$ and $q$ satisfying \eqref{reqion-w}. Since $\tilde{\bu}_{1}\in L^{\infty}(0,T;L^{2}(\mathbb{R}^{3}))$,
by interpolation we obtain that $\tilde{\bu}_{1}\in L^{p}(0,T;L^{q}(B^{c}))$
for $\frac{2}{p}+\frac{4}{q}>1$ and $q\geq2$. Therefore we proved
that $\tilde{\bu}_{1}\in L^{p}(0,T;L^{q}(\mathbb{R}^{3}))$ for $\frac{2}{p}+\frac{3}{q}>1$
and $q\geq2$. The same procedure applies to $\tilde{\bu}_{2}$ and
the proof is finished.
\end{proof}

\subsubsection*{Acknowledgments}

The authors would like to thank J.~Gómez-Serrano, H.~Jia, and V.~Vicol for valuable discussions and comments.
Parts of this work were done while J.~Guillod was at the School of Mathematics
of the University of Minnesota, the Mathematics Department of Princeton University,
and the ICERM at Brown University. The hospitality and facilities of these
institutions are gratefully acknowledged.
The research of J.~Guillod was supported by the Swiss National Science Foundation
grants \href{http://p3.snf.ch/Project-161996}{161996}
and \href{http://p3.snf.ch/Project-171500}{171500}.
The research of V.~Šverák was partially supported by grant
\href{https://www.nsf.gov/awardsearch/showAward?AWD_ID=1362467}{DMS 1362467}
from the National Science Foundation.

\bibliographystyle{merlin-doi}
\phantomsection\addcontentsline{toc}{section}{\refname}\bibliography{biblio}

\end{document}